\DeclarePairedDelimiter{\ceil}{\lceil}{\rceil}
\newcommand\round[1]{\left[#1\right]}
\newtheorem{Defn}{Definition}[section]
\newtheorem{Lemma}{Lemma}[section]
\newtheorem{Theorem}{Theorem}[section]
\newtheorem{Corollary}[Lemma]{Corollary}
\newtheorem{Remark}{Remark}[section]
\newproof{proof}{Proof}
\newproof{Pot}{Proof of Theorem}
\def\ps@pprintTitle{%
 \let\@oddhead\@empty
 \let\@evenhead\@empty
 \def\@oddfoot{}%
 \let\@evenfoot\@oddfoot}
\begin{document}

\begin{frontmatter}
\title{Efficient Structural Descriptor Sequence to Identify Graph Isomorphism and Graph Automorphism} %,t2}}

%\tnotetext[t2] {}
\author[siva]{Sivakumar Karunakaran} %\fnref{fn1}}
\ead{sivakumar\_karunakaranm@srmuniv.edu.in}
\author[lavanya]{Lavanya Selvaganesh\corref{cor1}} % \fnref{fn2}}
\ead{lavanyas.mat@iitbhu.ac.in}
\cortext[cor1]{Corresponding author. This work was presented as an invited talk at ICDM 2019, 4-6 Jan 2019 at Amrita University, Coimbatore, India}
%\fntext[fn1]{}
%\fntext[fn2]{}

\address[siva]{SRM Research Institute, S R M Institute of Science and Technology Kattankulathur, Chennai - 603203, INDIA}

\address[lavanya]{Department of Mathematical Sciences, Indian Institute of Technology (BHU), Varanasi-221005, INDIA}

\begin{abstract}
In this paper, we study the graph isomorphism and graph automorphism problems. We propose a novel technique to analyze graph isomorphism and graph automorphism. Further we handled some strongly regular datasets for prove the efficiency of our technique. The neighbourhood matrix  $ \mathcal{NM}(G) $ was proposed in \cite {ALPaper} as a novel representation of graphs and was defined using the neighbourhood sets of the vertices. It was also shown that the matrix exhibits a bijection between the product of two well known graph matrices, namely the adjacency matrix and the Laplacian matrix. Further, in a recent work\cite{NM_SPath}, we introduced the sequence of matrices representing the powers of $\mathcal{NM}(G)$ and denoted it as $ \mathcal{NM}^{\{l\}}, 1\leq l \leq k(G)$ where $ k(G) $ is called the \textbf{iteration number}, $k(G)=\ceil*{\log_{2}diameter(G)} $. 
In this article we introduce a structural descriptor given by a sequence and clique sequence for any undirected unweighted simple graphs with help of the sequences of matrices $ NM^{\{l\}} $. The $ i^{th} $ element of structural descriptor sequence encodes the complete structural information of the graph from the vertex $ i\in V(G) $. The $ i^{th} $ element of clique sequence encodes the Maximal cliques on $ i $ vertices. The above sequences is shown to be a graph invariants and is used to study the graph isomorphism and automorphism problem. 
\end{abstract}

\begin{keyword}
Graph Matrices, Graph Isomorphism, Maximal clique, Graph Automorphism, Product of Matrices, Structural descriptors.
\MSC{05C50, 05C60, 05C62, 05C85}
\end{keyword}
\end{frontmatter}
\section{Introduction} 
One of the classical and long standing problem in graph theory is the graph isomorphism problem. Study of graph isomorphisms is not only of theoretical interest, but also has found applications in diversified fields such as cryptography, image processing, chemical structural analysis, biometric analysis, mathematical chemistry, bioinformatics, gene network analysis to name a few. The challenging task in solving graph isomorphism problem is in finding the permutation matrix, if it exists, that can be associated with the adjacency matrices of the given graph. We also know that there are $ n! $ such permutation matrices that are possible, where $ n $ is the number of vertices. However, finding one suitable choice is time consuming. There are various graph invariants that can be used to identify two non-isomorphic graphs. That is, for any two graphs, if a graph invariant is not equal, then it immediately follows that the graphs are not isomorphic. For example, the following invariants have been well studied for this purpose, such as, number of vertices, number of edges, degree sequence of the graph, diameter, colouring number, eigenvalues of the associated graph matrices, etc. However, for each of the invariant, if the invariants are equal one cannot declare the graphs are isomorphic. For example, there are non-isomorphic graphs which have same spectral values and such graphs are referred as isospectral graphs.

There are several algorithms to solve the graph isomorphism problem. Computationally, many algorithms have been proposed for the same. Ullman \cite{Sub_Iso_Ullman} proposed the first algorithm for a more general problem known as subgraph isomorphism whose running time is $ \mathcal{O}(n! n^{3}) $. During the last decade many algorithm such as VF2 \cite{VF2_Luigi}, VF2++\cite{Alpar_VF2++}, QuickSI \cite{QuickSI}, GADDI \cite{GADDI_Algori}, have been proposed to improve the efficiency of Ullman's algorithm. Recent developments were made in 2016 by Babai \cite{Quasipolynomial_Babai}, who proposed a more theoretical algorithm based on divide and conquer strategy to solve subgraph isomorphism problem in Quasi Polynomial time ($ 2^{(\log n)^{\mathcal{O}(3)}} $) which has certain implementation difficulties. Due to the non-availability of polynomial time solvable algorithm many researchers have also studied this problem for special classes of graphs such as trees, planar graphs, interval graphs and bounded parameter such as genus, degree and treewidth graphs \cite{Pol_Fixed_Genus,Poly_tree,Poly_Interval,Poly_Permutation,Poly_Planar}.

Maximal clique and graph automorphism problems are interesting and long standing problems in graph theory. Lots research works are done on graph automorphism problem \cite{REF_1,REF_2,REF_3,REF_4,REF_8,REF_9} and clique problem \cite{REF_10,REF_11,REF_12,REF_13}. 

In this paper we study the graph isomorphism problem and graph automorphism problem using our newly proposed structural descriptor sequence and clique sequence. The above sequences are completely constructed form our novel techniques.  This sequences are used to reduce the running time for classify the non isomorphic graphs and find the automorphism groups. We use the recently proposed matrix representation of graphs using neighbourhood sets called neighbourhood matrix $ \mathcal{NM}(G) $ \cite{ALPaper}. Further, we also iteratively construct a finite sequence of powers of the given graph and their corresponding neighbourhood matrices \cite{NM_SPath}. This sequence of graph matrices help us to define a collection of measures, capturing the structural information of graphs. 

The paper is organized as follows: Section 2 presents all required basic definitions and results. Section 3 is the main section which defines the collection of measures and structural descriptor and the main result for testing graph isomorphism. In Section 4, we discuss the clique sequence and its time complexity. Section 5 ensure the efficiency of our structural descriptor sequence and clique sequence. Section 6, describe the way of finding automorphism groups of a given graph and time complexity. We conclude the paper in section 7. 
\section{Sequence of powers of $ \mathcal{NM }(G)$ matrix} 
Throughout this paper, we consider only undirected, unweighted simple graphs. For all basic notations and definitions of graph theory, we follow the books by J.A. Bondy and U.S.R. Murty \cite{Graphtheory} and D.B. West \cite{GraphTheoryWest}. In this section, we present all the required notations and definitions. 

A graph $ G $ is an ordered pair $ (V(G),E(G)) $ consisting of a set $ V(G) $ of vertices and a set $ E(G) $, disjoint from $ V(G) $, of edges, together with an incidence function $ \psi_{G}  $ that associates with each edge of $ G $ an unordered pair of vertices of $ G $. As we work with matrices, the vertex set $ V(G) $ is considered to be labelled with integers $ \{1,2,...,n\} $. For a vertex $v\in V(G)$, let $N_{G}(v)$ denote the set of all neighbors of $ v $. The degree of a vertex $v$ is given by $ d_{G}(v) $ or $|N_{G}(v)|$. The diameter of the graph is denoted by $ diameter(G)$ and the shortest path distance between two vertices $i$ and $j$ in $G$ is denoted by $ d_{G}(i,j), i,j\in V(G)$. Let $ A_{G} \text{ or } A(G), D(G) $ and $ C(G)(:=D(G)-A(G)) $ denote the adjacency matrix, degree matrix and the  Laplacian/admittance matrix of the grpah $ G $ respectively. Two graphs $ G $ and $ H $ are isomorphic, written $ G \cong H $, if there are bijections $ \theta :V(G)\rightarrow V(H) $ and $ \phi: E(G)\rightarrow E(H) $ such that $ \psi_{G}(e)=uv $ if and only if $ \psi_{H}(\phi(e))=\theta(u)\theta(v); $ (that is a bijection of vertices preserving adjacency) such a pair of mappings is called an isomorphism between $ G  $ and $ H $. An automorphism of a graph is an isomorphism of a graph to itself. In the case of a simple graph, an automorphism is just a permutation $ \alpha $ of its vertex set which preserves adjacency: if $ uv $ is an edge then so is $ \alpha (u) \alpha(v) $. The automorphism groups of $ G $ denoted by $ Aut(G) $, is the set of all automorphisms of a groups $ G $. Graphs in which no two vertices are similar are called asymmetric; these are the graphs which have only the identity permutation as automorphism. 

The graph isomorphism problem tests whether two given graphs are isomorphic or not. In other words, it asks whether there is a one-to-one mapping between the vertices of the graphs preserving the adjacency. 

\begin{Defn}\label{d3} {\normalfont{ \cite{ALPaper}}}
Given a graph $ G $, the neighbourhood matrix, denoted by $ \mathcal{NM}(G) =(\eta_{ij})$ is defined as 
$$\eta_{ij}=\begin{cases}
-|N_{G}(i)|,  & \text{ if }  i=j\\
|N_{G}(j)-N_{G}(i)|,  &   \text{ if }   (i,j)\in E(G)\\
-|N_{G}(i) \cap N_{G}(j)|,  & \text{ if }    (i,j)\notin E(G) \\
\end{cases} $$
\end{Defn}
The following lemma facilitates us with a way of computing the neighbourhood matrix. Proofs of the following results are given in appendix for immediate reference as they are awaiting publication elsewhere. 

\begin{Lemma}\label{prop.2}{\normalfont{ \cite{ALPaper}}}
Given a graph $ G $, the entries of any row of $ \mathcal{NM}(G) $ corresponds to the subgraph with vertices from the first two levels of the  level decomposition of the graph rooted at the given vertex with edges connecting the vertices indifferent levels. $\hfill \square $
\end{Lemma}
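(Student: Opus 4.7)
The plan is to prove the lemma by a case analysis on the position of the column index $j$ in the level decomposition of $G$ rooted at the vertex $i$ labelling the row. Let $L_0=\{i\}$, $L_1=N_G(i)$, and $L_k=\{v\in V(G):d_G(i,v)=k\}$ for $k\ge 2$. I will show that the entry $\eta_{ij}$ vanishes whenever $j$ lies in $L_k$ for $k\ge 3$, and that for $j\in L_0\cup L_1\cup L_2$ the entry records a specific count of inter-level edges. This will give the claimed correspondence with the subgraph induced on the first three levels using only edges between consecutive levels.

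First I would treat the diagonal. By Definition \ref{d3}, $\eta_{ii}=-|N_G(i)|=-|L_1|$, which records the size of Level $1$. Next, for $j\in L_1$ we have $(i,j)\in E(G)$, so $\eta_{ij}=|N_G(j)\setminus N_G(i)|$. Since $j$ is at distance $1$ from $i$, every neighbour of $j$ lies in $L_0\cup L_1\cup L_2$, and moreover $N_G(j)\cap N_G(i)=N_G(j)\cap L_1$. Consequently $N_G(j)\setminus N_G(i)=\{i\}\,\cup\,(N_G(j)\cap L_2)$, and so $\eta_{ij}=1+|N_G(j)\cap L_2|$, which encodes the number of edges from $j\in L_1$ going down to $L_2$, plus the single edge from $j$ up to $i\in L_0$.

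Now for $j\in L_2$ we have $(i,j)\notin E(G)$, whence $\eta_{ij}=-|N_G(i)\cap N_G(j)|=-|L_1\cap N_G(j)|$, which counts precisely the edges from $j$ up to $L_1$ in the level decomposition. Finally, if $j\in L_k$ for some $k\ge 3$, then any common neighbour of $i$ and $j$ would have to lie in $L_1$ and be adjacent to $j$, forcing $d_G(i,j)\le 2$, a contradiction. Hence $N_G(i)\cap N_G(j)=\emptyset$ and $\eta_{ij}=0$. Combining the four cases, the only nonzero entries of row $i$ correspond to vertices in $L_0\cup L_1\cup L_2$, and each such entry is exactly a (signed) count of incidences between a vertex and an adjacent level, which is the statement we wanted.

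The argument is almost entirely case work rather than a conceptual difficulty, but the step that deserves care is the case $j\in L_1$: one must recognise that $N_G(j)\setminus N_G(i)$ silently picks up $i$ itself (since $i\in N_G(j)$ while $i\notin N_G(i)$), which accounts for the inter-level edge between Levels $0$ and $1$ embedded in the entry. The only other point worth verifying explicitly is the implication $k\ge 3\Rightarrow N_G(i)\cap N_G(j)=\emptyset$, which is immediate from the definition of levels but is what pins the support of row $i$ to the first three levels only.
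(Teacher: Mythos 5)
Your case analysis is correct and is essentially the argument the paper itself intends: the paper cites this lemma from \cite{ALPaper} without reproducing a proof, and its Remark~\ref{Remark.2} enumerates exactly the four cases you work through (diagonal entry $-|L_1|$, positive entries $1+|N_G(j)\cap L_2|$ for $j\in L_1$, negative entries $-|N_G(j)\cap L_1|$ for $j\in L_2$, and zero otherwise), with your observation that $i$ itself is silently counted in $N_G(j)\setminus N_G(i)$ matching the paper's ``$(c-1)$ vertices at distance $2$'' phrasing. The only trivial point left implicit is that vertices in a different component, which lie in no $L_k$, also receive a zero entry by the same empty-common-neighbourhood argument.
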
 

\begin{Remark}\label{Remark.2}{\normalfont{ \cite{ALPaper}}}
The above lemma also reveals following information about the neighbourhood matrix, which justifies its terminology. For any $ i^{th} $ row of $ \mathcal{NM}(G) $,
\begin{enumerate}
\item The diagonal entries are either negative or zero. In particular, if $ \eta_{ii}=-c $, then the degree of the vertex is $ c $ and that there will be exactly $ c $ positive entries in that row. If $ \eta_{ii}=0 $ then the vertex $ i $ is isolated.
\item For some positive integer $ c $, if $ \eta_{ij}=c $ then $ j\in N_{G}(i) $ and that there exists $ (c-1) $ vertices are adjacent to $ 1 $ and at distance $ 2 $ from $ i $ through $ j $.
\item If $ \eta_{ij}=-c $, then the vertex $ d_{G}(i,j)=2 $ and there exists $ c $ paths of length two from vertex $ i $ to $ j $. In other words, there exist $ c $ common neighbors between vertex $ i $ and $ j $.
%\item If an entry, $ \eta_{ij}=0 $ then the shortest path distance between vertices $ i $ and $ j $ is at least $ 3 $.
\item If an entry, $ \eta_{ij}=0 $ then the distance between vertices $ i $ and $ j $ is at least $ 3 $ or the vertices $ i $ and $ j $ lie in different components.$\hfill \square $
\end{enumerate}
\end{Remark}

\begin{Defn}\label{Def.1} \normalfont{\cite{NM_SPath} }
 Given a graph $ G $, let $ G^{\{1\}}=G$ and  $ \mathcal{NM}^{\{1\}} = \mathcal{NM}(G^{\{1\}}) $. For $l>1$, let $G^{\{l\}}$ be the graph constructed from the graph $G^{\{l-1\}}$ as below:  
$ V(G^{\{l\}})=V(G^{\{l-1\}})  $ and
 $ E(G^{\{l\}})=E(G^{\{l-1\}})\cup \{(i,j): \eta_{ij}^{\{l-1\}}<0, i\neq j\} $. The sequence of matrices, denoted by $ \mathcal{NM}^{\{l\}} $ is defined iteratively as $ \mathcal{NM}^{\{l\}}=\mathcal{NM}(G^{\{l\}}) $ and can be constructed by definition \ref{d3}. We refer to this  finite sequence of matrices as \textbf{\textit{Sequence of Powers of $ \mathcal{NM} $}}.  
 \end{Defn} 
 
  \begin{Remark} \normalfont{\cite{NM_SPath} }
The adjacency matrix of $ A(G^{\{l\}}) $ is given by:  
 $$  A(G^{\{l\}}) = (a^{\{l\}}_{ij}) =\begin{cases}
    1,& \text{ if } \eta_{ij}^{\{l-1\}}\neq 0, i\neq j\\
    0,& \text{ Otherwise }
        \end{cases} $$ 
  $\hfill \square $
 \end{Remark}

 \begin{Defn}\label{Def.2}  \normalfont{\cite{NM_SPath} }   
 Let $k$ be the smallest possible integer, such that, $\mathcal{NM}^{\{k\}}$ has either no zero entry or the number of non zero entries of $\mathcal{NM}^{\{k\}} $ and $\mathcal{NM}^{\{k+1\}} $ are the same.  
 The number $k(G):=k$ is called the \textbf{\textit{Iteration Number}} of $G$.
 \end{Defn}

\begin{Remark}\label{Rem.2} \normalfont{\cite{NM_SPath} }
	When $G = K_n$, the complete graph, $\mathcal{NM}(K_n)$ has no zero entries. Hence for $K_n$, the iteration number is $k(K_n) = 1$. Further, for a graph $G$ with diameter 2, $\mathcal{NM}(G)$ has no zero entries, hence iteration number $k(G) = 1$    $\hfill \square $
\end{Remark}

Let the number of non zero entries in $\mathcal{NM}^{\{k\}} $ be denoted by $z$. Note that $z\leq n^2$. 
\begin{Theorem}\label{Thm.4} \normalfont{\cite{NM_SPath} }
A graph $ G $ is connected if and only if $ z = n^{2} $. In addition, the iteration number $k(G)$ of the graph $G\neq K_{n}$ is given by $ k(G)=\ceil*{\log_{2}(diameter(G))} $. (For $ G=K_{n}, k(G)=1 $ and $ \mathcal{NM}^{\{1\}}=\mathcal{NM}(G)=-C(G)$) $\hfill \square $
 \end{Theorem}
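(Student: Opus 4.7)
The plan is to prove the two claims separately, with both resting on one pivotal geometric fact: the edge set of $G^{\{l\}}$ is obtained from that of $G^{\{l-1\}}$ by adjoining exactly those pairs that lie at distance $2$ in $G^{\{l-1\}}$. This fact follows directly from Remark 2.2 item 3, which says $\eta_{ij}^{\{l-1\}}<0$ with $i\neq j$ is equivalent to $d_{G^{\{l-1\}}}(i,j)=2$, together with the definition of $G^{\{l\}}$.

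For the connectivity characterization, I would argue both directions. If $G$ is disconnected with components $C_1,\dots,C_r$, then by induction on $l$ the graph $G^{\{l\}}$ has the same vertex partition into components, because adjoining distance-$2$ edges never merges components. Consequently for any $i\in C_a$, $j\in C_b$ with $a\neq b$ one has $d_{G^{\{l\}}}(i,j)=\infty$ for every $l$, and Remark 2.2 item 4 then forces $\eta_{ij}^{\{l\}}=0$; hence $z<n^{2}$ at every iteration. Conversely, assuming $G$ is connected, I would show that eventually $\mathrm{diameter}(G^{\{l\}})\leq 2$, at which point Remark 2.3 / Remark 2.2 imply $\mathcal{NM}^{\{l\}}$ has no zero entries, giving $z=n^{2}$. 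This reduces the connectivity direction to the diameter analysis needed for the formula for $k(G)$.

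For the formula $k(G)=\lceil \log_{2}\mathrm{diameter}(G)\rceil$, the key intermediate claim I would establish is
\[
\mathrm{diameter}(G^{\{l\}})=\Bigl\lceil \tfrac{\mathrm{diameter}(G^{\{l-1\}})}{2}\Bigr\rceil \qquad (l\geq 2).
\]
The upper bound comes from taking any shortest $u$–$v$ path of length $d$ in $G^{\{l-1\}}$ and compressing it by pairing consecutive edges: each pair of consecutive edges of the old path is a distance-$2$ segment and therefore an edge of $G^{\{l\}}$, yielding a $u$–$v$ walk of length $\lceil d/2\rceil$. The matching lower bound comes from the reverse direction: every edge of $G^{\{l\}}$ arose either as an edge of $G^{\{l-1\}}$ or as a length-$2$ path in $G^{\{l-1\}}$, so any $u$–$v$ path of length $d'$ in $G^{\{l\}}$ expands to a walk of length at most $2d'$ in $G^{\{l-1\}}$. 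Applying these bounds to a diameter-realizing pair in $G^{\{l-1\}}$ gives the displayed equality. Iterating yields $\mathrm{diameter}(G^{\{l\}})=\lceil D/2^{l-1}\rceil$ with $D=\mathrm{diameter}(G)$, and the smallest $l$ with $\lceil D/2^{l-1}\rceil\leq 2$ is $l=\lceil \log_{2} D\rceil$, as required. Finally, the $K_{n}$ case is immediate: the diameter is $1$, so $\mathcal{NM}(K_{n})$ has no zero off-diagonal entries, $k(K_{n})=1$, and a direct computation using $N_{K_{n}}(j)\setminus N_{K_{n}}(i)=\{i\}$ recovers $\mathcal{NM}(K_{n})=J-nI=-C(K_{n})$.

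I expect the main obstacle to be the halving identity for the diameter, specifically the lower bound. The delicate point is that new edges in $G^{\{l\}}$ really do correspond to length-$2$ paths in $G^{\{l-1\}}$ and not to some shorter shortcut, plus the nested ceiling identity $\lceil \lceil x/2\rceil/2\rceil=\lceil x/4\rceil$ used when iterating. Everything else—monotonicity of the edge sets, preservation of components, and the translation between "no zero entries" and "diameter $\leq 2$"—is bookkeeping already handled by the earlier remarks and definitions.
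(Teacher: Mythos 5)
The paper never actually proves this theorem: it is imported from reference \cite{NM_SPath}, stated with a closing box, and the appendix that supposedly contains the proofs holds only pseudocode, so there is no in-paper argument to compare yours against. Judged on its own, your proof is correct and is the natural one. The pivotal fact that $G^{\{l\}}$ is obtained from $G^{\{l-1\}}$ by adjoining exactly the distance-$2$ pairs (so $G^{\{l\}}$ is the square of $G^{\{l-1\}}$, i.e.\ $G^{\{l\}}=G^{2^{l-1}}$), the halving identity $\mathrm{diameter}(G^{\{l\}})=\lceil \mathrm{diameter}(G^{\{l-1\}})/2\rceil$ with both bounds argued as you do, and the preservation of components under squaring together yield both claims, and everything is consistent with the characterization $\eta^{\{l\}}_{ij}=0 \iff d_G(i,j)>2^l$ recorded in Theorem \ref{Thm.1}.

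Two small points to tighten. First, Definition \ref{Def.2} defines $k(G)$ by a disjunction, so in addition to showing that the ``no zero entry'' clause first holds at $l=\lceil\log_2 D\rceil$ you should rule out the stabilization clause firing earlier: for connected $G$, if some pair satisfies $d_G(i,j)>2^l$, then a vertex on a shortest $i$--$j$ path lies at distance exactly $2^l+1\le 2^{l+1}$ from $i$, so the count of nonzero entries strictly increases from $\mathcal{NM}^{\{l\}}$ to $\mathcal{NM}^{\{l+1\}}$ and stabilization cannot occur before saturation. Second, the conclusion $z=n^2$ tacitly requires every diagonal entry $\eta_{ii}=-|N_G(i)|$ to be nonzero, i.e.\ no isolated vertices; this holds for connected $G$ on $n\ge 2$ vertices, with $K_1$ a degenerate exception the theorem implicitly ignores.
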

\begin{Corollary}\label{Cor.1} \normalfont{\cite{NM_SPath} }
A graph $G$ is disconnected if and only if $ z<n^{2} $. Further, the iteration number $ k(G) $ of $G$ is given by $k(G)=\ceil*{\log_{2}S} $, where $ S $ is the maximum over the diameter of components of $G$. $\hfill \square $
\end{Corollary}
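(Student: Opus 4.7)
The first assertion, $G$ disconnected $\iff z<n^{2}$, is immediate from Theorem~\ref{Thm.4}: the theorem states $G$ connected $\iff z=n^{2}$, and since $z\le n^{2}$ in general, the negation gives precisely the disconnected characterization. No additional work is needed here.

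For the iteration-number formula, the plan is to reduce the disconnected case to the connected case component by component. Let $G_{1},\dots,G_{p}$ be the connected components of $G$, with diameters $d_{1},\dots,d_{p}$, and set $S=\max_{t} d_{t}$. First I would observe, using Remark~\ref{Remark.2}(4), that whenever $i$ and $j$ lie in different components, the entry $\eta_{ij}$ of $\mathcal{NM}(G)$ is $0$. Consequently, after re-ordering vertices componentwise, $\mathcal{NM}(G)$ is block-diagonal, with the $t$-th diagonal block equal to $\mathcal{NM}(G_{t})$ computed intrinsically inside $G_{t}$ (the intrinsic neighbourhood sets and the neighbourhood sets taken inside $G$ coincide, since no vertex of $G_{t}$ has a neighbour outside $G_{t}$).

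Next I would show by induction on $l$ that this block-diagonal structure propagates through the sequence $\mathcal{NM}^{\{l\}}$. By Definition~\ref{Def.1}, the edge-update rule adds an edge $(i,j)$ only when $\eta_{ij}^{\{l-1\}}<0$; since off-block entries are $0$, no edges between different components are ever introduced, so the connected components of $G^{\{l\}}$ are exactly $G_{1}^{\{l\}},\dots,G_{p}^{\{l\}}$, where $G_{t}^{\{l\}}$ is the intrinsic $l$-th iterate of $G_{t}$. Therefore $\mathcal{NM}^{\{l\}}(G)$ is block-diagonal with blocks $\mathcal{NM}^{\{l\}}(G_{t})$, and the count of non-zero entries of $\mathcal{NM}^{\{l\}}(G)$ is the sum, over $t$, of the corresponding counts for each component.

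Finally, I would use this to pin down $k(G)$. The stabilization condition in Definition~\ref{Def.2} is that the number of non-zero entries stops increasing; by the previous step this happens for $G$ exactly when it happens simultaneously for every component $G_{t}$. Hence $k(G)=\max_{t} k(G_{t})$. Applying Theorem~\ref{Thm.4} to each component (with the complete-graph case handled separately as in Remark~\ref{Rem.2}) gives $k(G_{t})=\lceil\log_{2} d_{t}\rceil$, and since $\lceil\log_{2}\cdot\rceil$ is monotone,
\[
k(G)=\max_{t}\lceil\log_{2} d_{t}\rceil=\lceil\log_{2}\max_{t} d_{t}\rceil=\lceil\log_{2} S\rceil,
\]
as required. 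The main technical point, and the only step that is not essentially bookkeeping, is the inductive verification that the iterative edge-addition process respects the component decomposition; everything else is a direct appeal to Theorem~\ref{Thm.4}.
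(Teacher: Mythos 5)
Your argument is sound, and in fact the paper itself offers nothing to compare it against: Corollary~\ref{Cor.1} is imported from \cite{NM_SPath} and stated with no proof (the promised appendix contains only pseudocode), so your reduction is the only argument on the table. The two load-bearing observations are exactly right: by Definition~\ref{d3} (or Remark~\ref{Remark.2}(4)) cross-component entries of $\mathcal{NM}(G)$ vanish, and since the update rule of Definition~\ref{Def.1} only adds edges at strictly negative entries, the block-diagonal structure is preserved inductively, so the non-zero count of $\mathcal{NM}^{\{l\}}(G)$ is the sum of the per-component counts and the stabilization criterion of Definition~\ref{Def.2} localizes to components, giving $k(G)=\max_{t}k(G_{t})$. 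The final identity $\max_{t}\lceil\log_{2}d_{t}\rceil=\lceil\log_{2}\max_{t}d_{t}\rceil$ is valid by monotonicity. The one wrinkle worth stating explicitly is the degenerate components: for a complete (or single-vertex) component $k(G_{t})=1$ while $\lceil\log_{2}d_{t}\rceil$ is $0$ (or undefined for $d_{t}=0$), so $\max_{t}k(G_{t})$ and $\lceil\log_{2}S\rceil$ disagree precisely when \emph{every} component has diameter at most $1$; this defect is inherited from the statement of Theorem~\ref{Thm.4} itself (cf.\ Remark~\ref{Rem.2}) and is not a flaw introduced by your proof, but your sentence ``gives $k(G_{t})=\lceil\log_{2}d_{t}\rceil$'' should carve out that case rather than fold it in.
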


\begin{Defn}\label{IsoDef1}
Let $  N({G^{\{l\}}},i)$ denote the neighbourhood set of a vertex $ i $ in $ G^{\{l\}} $, that is,
\begin{equation}\label{Isoeq.1}
 N({G^{\{l\}}},i) =\{x : \eta_{ix}^{\{l\}}>0\} 
\end{equation} 

Let $ X(G^{\{l\}},i) $ be the set of vertices given by
 \begin{equation}\label{Isoeq.2}
 X(G^{\{l\}},i)=\{y:\eta_{iy}^{\{l\}}<0, i\neq y \} 
 \end{equation}
\end{Defn}
Note that, when $ l=1 $, $ N(G^{\{1\}},i)=N_{G}(i)$. For any $ l>1 $, $  N({G^{\{l\}}},i)= N({G^{\{l-1\}}},i) \cup  X({G^{\{l-1\}}},i)$, for any given $ i\in V(G). $  

\begin{Theorem}\label{Thm.1}\normalfont{\cite{NM_SPath} }
 Let $G$ be a graph on $n$ vertices and $k(G)$ be the iteration number of $G$. For $1 \leq l \leq k(G)$,  the off-diagonal elements of $\mathcal{NM}^{\{l\}}$ can be characterized as follows: For $ 1\leq i\neq j\leq n $
 \begin{enumerate} 
 \item {$\eta^{\{l\}}_{ij}=0$  if and only if  $ d_{G}(i,j)> 2^{l}$} 
 \item {$\eta^{\{l\}}_{ij}>0$  if and only if  $0 < d_{G}(i,j)\leq 2^{l-1}$} 
 \item {$\eta^{\{l\}}_{ij}<0$  if and only if  $2^{l-1}<d_{G}(i,j) \leq 2^l$}  $\hfill \square $
 \end{enumerate}
 \end{Theorem}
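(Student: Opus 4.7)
The natural approach is induction on $l$, reading the base case $l=1$ directly from Definition \ref{d3}, and at the inductive step using the inductive hypothesis to describe the edge set of $G^{\{l\}}$ in terms of $G$-distances, then reapplying Definition \ref{d3} to $G^{\{l\}}$ itself.

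For the base case, $\mathcal{NM}^{\{1\}}=\mathcal{NM}(G)$. When $i\neq j$, Definition \ref{d3} gives three cases. If $(i,j)\in E(G)$, then $\eta_{ij}=|N_G(j)\setminus N_G(i)|\geq 1$, because $i\in N_G(j)\setminus N_G(i)$ in a simple graph; this corresponds to $d_G(i,j)=1=2^{0}$. If $(i,j)\notin E(G)$, then $\eta_{ij}=-|N_G(i)\cap N_G(j)|$, which is strictly negative precisely when $i$ and $j$ share a common neighbor, i.e.\ $d_G(i,j)=2=2^1$, and is zero precisely when $d_G(i,j)>2$. This matches the three claims at $l=1$.

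For the inductive step, assume the claim at level $l-1$. Definition \ref{Def.1} combined with the inductive hypothesis gives
\[
E(G^{\{l\}})=\{(i,j):\eta^{\{l-1\}}_{ij}\neq 0,\ i\neq j\}=\{(i,j):0<d_G(i,j)\leq 2^{l-1}\}.
\]
From this edge set I can read off three statements about graph distance in $G^{\{l\}}$: (i) $d_{G^{\{l\}}}(i,j)=1$ iff $0<d_G(i,j)\leq 2^{l-1}$; (ii) $d_{G^{\{l\}}}(i,j)=2$ iff $2^{l-1}<d_G(i,j)\leq 2^{l}$; (iii) $d_{G^{\{l\}}}(i,j)>2$ iff $d_G(i,j)>2^{l}$. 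The non-trivial direction of (ii) is a midpoint argument: given a shortest $G$-path $i=v_0,v_1,\dots,v_d=j$ with $2^{l-1}<d\leq 2^{l}$, the vertex $k=v_{\lfloor d/2\rfloor}$ satisfies $d_G(i,k),\,d_G(k,j)\leq \lceil d/2\rceil\leq 2^{l-1}$, so $k$ is a common $G^{\{l\}}$-neighbor of $i,j$; conversely, any such $k$ gives $d_G(i,j)\leq 2\cdot 2^{l-1}=2^{l}$, which also yields the contrapositive in (iii). Applying the base-case analysis to $G^{\{l\}}$, whose neighbourhood matrix is $\mathcal{NM}^{\{l\}}$ by construction, converts the three distance ranges (i)--(iii) to $\eta^{\{l\}}_{ij}>0$, $\eta^{\{l\}}_{ij}<0$, and $\eta^{\{l\}}_{ij}=0$ respectively.

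The main obstacle is the midpoint construction in (ii); once that distance lemma for $G^{\{l\}}$ is secured, the rest is bookkeeping through Definition \ref{d3}. A secondary subtlety is the constraint $l\leq k(G)$: since the inductive hypothesis at $l-1$ is available whenever $l\geq 2$ and the base case covers $l=1$, the induction carries the full admissible range without needing any separate treatment at the terminal level.
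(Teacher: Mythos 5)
Your argument is correct and complete. Note that the paper itself states this theorem without proof (it is imported from the cited reference and closed with a box), so there is no in-paper proof to compare against; judged on its own, your induction is exactly the natural argument. The base case is handled correctly via Definition \ref{d3}, including the key observation that adjacency forces $i\in N_G(j)\setminus N_G(i)$ and hence a strictly positive entry. The inductive step correctly identifies $G^{\{l\}}$ as the $2^{l-1}$-th power of $G$ (using parts 2 and 3 of the hypothesis together, which is what justifies replacing the union in Definition \ref{Def.1} by the condition $\eta^{\{l-1\}}_{ij}\neq 0$), and the midpoint argument for statement (ii) is sound: for $2^{l-1}<d\leq 2^{l}$ the vertex $v_{\lfloor d/2\rfloor}$ is distinct from both endpoints and lies within $G$-distance $\lceil d/2\rceil\leq 2^{l-1}$ of each, while the converse bound $d_G(i,j)\leq 2\cdot 2^{l-1}$ disposes of (iii) by elimination (the disconnected case, $d_G(i,j)=\infty$, falls under $d_G(i,j)>2^l$ as required). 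Your closing remark about the range $l\leq k(G)$ is also right: the induction is valid for all $l\geq 1$ and the upper bound only reflects where the sequence of matrices is defined.
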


By combining definition \ref{d3}, definition \ref{Def.1} and Theorem \ref{Thm.1} we state the following corollaries without proof. 
\begin{Corollary}\label{RRM1} 
	For a given graph $ G $, if $ \eta^{\{l\}}_{ij} >0$, for some $ l\leq k(G) $, then the following conditions are equivalent: 	
	\begin{enumerate}
		\item $ (i,j)\in E(G^{\{p\}}) $, where $ p\geq l $.
		\item $ \eta^{\{l\}}_{ij}=|N(G^{\{l\}},j) -N(G^{\{l\}},i)| $.
		\item $ d_{G}(i,j)\leq 2^{l-1} $ $\hfill \square $
	\end{enumerate}
\end{Corollary}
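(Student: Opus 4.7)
The plan is to leverage Theorem \ref{Thm.1}, which already pins down the sign of $\eta^{\{l\}}_{ij}$ to dyadic distance bands in $G$. The hypothesis $\eta^{\{l\}}_{ij}>0$ together with Theorem \ref{Thm.1}(2) gives condition (3) for free, so the real work is connecting (3) to (1) and (2).

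For the edge condition (1), I would prove by induction on $l$ the auxiliary characterization
$$ E(G^{\{l\}}) = \{(i,j) : 0 < d_G(i,j) \leq 2^{l-1}\}. $$
The base case $l=1$ is immediate since $G^{\{1\}}=G$. For the inductive step, Definition \ref{Def.1} gives $E(G^{\{l\}}) = E(G^{\{l-1\}}) \cup \{(i,j) : \eta^{\{l-1\}}_{ij}<0,\ i\neq j\}$, and Theorem \ref{Thm.1}(3) identifies the newly added pairs as exactly those with $2^{l-2} < d_G(i,j) \leq 2^{l-1}$, which merges with the inductive hypothesis. Because the construction in Definition \ref{Def.1} only adds edges, the sequence $E(G^{\{l\}}) \subseteq E(G^{\{l+1\}}) \subseteq \cdots$ is nested, so membership at level $l$ propagates to every $p\geq l$; thus (1) and (3) become equivalent.

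For (2), once we know $(i,j) \in E(G^{\{l\}})$, applying Definition \ref{d3} to the graph $G^{\{l\}}$ itself gives precisely $\eta^{\{l\}}_{ij} = |N(G^{\{l\}},j) - N(G^{\{l\}},i)|$, since this is the prescribed value in the adjacency case. Conversely, the right hand side of (2) is nonnegative, so the equality together with the hypothesis $\eta^{\{l\}}_{ij}>0$ forces the adjacency branch of Definition \ref{d3} to apply at level $l$, which means $(i,j)\in E(G^{\{l\}})$ and hence (1). A clean way to write up the full equivalence is a short cycle $(3)\Rightarrow(1)\Rightarrow(2)\Rightarrow(3)$, with (3) recovered from (2) by first deducing (1) and then invoking Theorem \ref{Thm.1}(2) via the auxiliary characterization.

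I do not anticipate any genuine obstacle here: the corollary is essentially a repackaging of Theorem \ref{Thm.1} with Definition \ref{d3} plus the trivial induction above. The one point requiring care is the quantifier in (1), but monotonicity of $E(G^{\{l\}})$ handles it cleanly, and one should also verify that the ``for some $p\geq l$'' reading and the ``for all $p\geq l$'' reading coincide under this monotonicity.
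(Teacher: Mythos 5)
Your proof is correct and uses exactly the ingredients the paper itself points to --- Definition \ref{d3}, Definition \ref{Def.1} and Theorem \ref{Thm.1}; the paper in fact states this corollary \emph{without proof}, and your induction establishing $E(G^{\{p\}})=\{(i,j): 0<d_G(i,j)\le 2^{p-1}\}$ together with the monotonicity of the edge sets is the natural way to make that omitted argument explicit. One small caveat: the coincidence of the ``for some $p\ge l$'' and ``for all $p\ge l$'' readings of condition (1) does not follow from monotonicity alone (an edge first appearing at level $p>l$ would satisfy the former but not place $(i,j)$ in $E(G^{\{l\}})$); it is the standing hypothesis $\eta^{\{l\}}_{ij}>0$, via Theorem \ref{Thm.1}(2) and your auxiliary characterization, that forces $(i,j)\in E(G^{\{l\}})$ and hence collapses the two readings.
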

\begin{Corollary}\label{RRM2} 
For a graph $ G $, if $ \eta^{\{l\}}_{ij}=0, i\neq j $, then the following conditions are equivalent:
	\begin{enumerate}
		\item$ (i,j)\notin E(G^{\{p\}}) $, where $ 1\leq p\leq l+1 $
		\item $ d_{G}(i,j)>2^{l} $ $\hfill \square $
	\end{enumerate}
\end{Corollary}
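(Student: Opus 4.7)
The plan is to reduce both parts of the equivalence to a single distance statement, namely $d_{G}(i,j) > 2^{l}$, using Theorem~\ref{Thm.1} on one side and the Remark characterizing $A(G^{\{l\}})$ on the other. Since Theorem~\ref{Thm.1}(1) already says $\eta^{\{l\}}_{ij}=0 \Leftrightarrow d_{G}(i,j) > 2^{l}$, the hypothesis of the corollary is equivalent to this distance condition, and I only have to show that the same distance condition captures when $(i,j)$ fails to be an edge at every level $p=1,\ldots,l+1$.

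The key intermediate claim I would establish is: for every $p \geq 1$ and $i\neq j$,
\begin{equation*}
(i,j)\in E(G^{\{p\}}) \iff d_{G}(i,j)\leq 2^{p-1}.
\end{equation*}
For $p=1$ this is just the definition of $G^{\{1\}}=G$ together with the fact that adjacency corresponds to distance $1=2^{0}$. For $p\geq 2$, I would invoke the Remark that follows Definition~\ref{Def.1}, which tells us $(i,j)\in E(G^{\{p\}})$ iff $\eta^{\{p-1\}}_{ij}\neq 0$, and then apply Theorem~\ref{Thm.1} at level $p-1$: combining parts (2) and (3) gives $\eta^{\{p-1\}}_{ij}\neq 0 \iff d_{G}(i,j)\leq 2^{p-1}$. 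This chains to the desired characterization.

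With the intermediate claim in hand, the proof of the corollary closes by monotonicity of the sequence $G^{\{1\}}\subseteq G^{\{2\}} \subseteq \cdots \subseteq G^{\{l+1\}}$, which is immediate from Definition~\ref{Def.1} since edges are only added. The strongest of the conditions $\{(i,j)\notin E(G^{\{p\}}) : 1\leq p\leq l+1\}$ is the one at $p=l+1$, i.e.\ $d_{G}(i,j)>2^{l}$, and this single condition implies all the weaker ones. So $(1)\Leftrightarrow d_{G}(i,j)>2^{l} \Leftrightarrow (2)$, and by Theorem~\ref{Thm.1}(1) both are equivalent to the hypothesis $\eta^{\{l\}}_{ij}=0$.

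There is no real obstacle here; the result is essentially a bookkeeping corollary that re-indexes Theorem~\ref{Thm.1} and the adjacency-matrix characterization of $G^{\{p\}}$. The only point requiring care is matching the index shift between the graph $G^{\{p\}}$ and the matrix $\mathcal{NM}^{\{p-1\}}$ that was used to build it, and handling $p=1$ (where no previous neighbourhood matrix exists) separately by direct inspection.
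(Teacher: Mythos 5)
Your proof is correct and takes essentially the approach the paper intends: Corollary \ref{RRM2} is stated there without proof, with the remark that it follows by combining Definition \ref{d3}, Definition \ref{Def.1} and Theorem \ref{Thm.1}, and your argument is exactly that combination made explicit (the characterization $(i,j)\in E(G^{\{p\}})\iff d_{G}(i,j)\leq 2^{p-1}$ together with monotonicity of the edge sets).
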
 
\begin{Corollary}\label{RRM3} 
	For a graph $ G $, if $ \eta^{\{l\}}_{ij}<0, i\neq j$, for some $ l,1\leq l\leq k(G) $ then the following conditions are equivalent,
	\begin{enumerate}
		\item $ (i,j)\in E(G^{\{p\}}) $, where $ p\geq l+1 $
		\item $ \eta^{\{l\}}_{ij}=-|N(G^{\{l\}},i)\cap N(G^{\{l\}},j)|$
		\item $ 2^{l-1}<d_{G}(i,j)\leq 2^{l} $ $\hfill \square $
	\end{enumerate} 
\end{Corollary}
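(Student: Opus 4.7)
The plan is to show that each of the three conditions is a direct consequence of the single hypothesis $\eta^{\{l\}}_{ij}<0$ (with $i\neq j$), from which their mutual equivalence is immediate. The proof essentially repackages Definitions \ref{d3} and \ref{Def.1} together with Theorem \ref{Thm.1}(3), so I expect it to be short and largely bookkeeping; there is no deep combinatorial step.

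First I would derive (2). By the case analysis in Definition \ref{d3} applied to $\mathcal{NM}(G^{\{l\}})$, an off-diagonal entry $\eta^{\{l\}}_{ij}$ equals $|N(G^{\{l\}},j)-N(G^{\{l\}},i)|\ge 0$ whenever $(i,j)\in E(G^{\{l\}})$. Hence the hypothesis $\eta^{\{l\}}_{ij}<0$ forces $(i,j)\notin E(G^{\{l\}})$, and the non-adjacent-off-diagonal case of Definition \ref{d3} pins the value down to $\eta^{\{l\}}_{ij}=-|N(G^{\{l\}},i)\cap N(G^{\{l\}},j)|$, which is exactly (2). Next, for (3) I would simply cite Theorem \ref{Thm.1}(3), which states the equivalence of $\eta^{\{l\}}_{ij}<0$ with $2^{l-1}<d_G(i,j)\le 2^l$.

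For (1), I would invoke the recursive edge construction in Definition \ref{Def.1}: $E(G^{\{l+1\}})=E(G^{\{l\}})\cup\{(i,j):\eta^{\{l\}}_{ij}<0,\ i\neq j\}$. The hypothesis then immediately places $(i,j)\in E(G^{\{l+1\}})$. The same defining formula shows $E(G^{\{p\}})\subseteq E(G^{\{p+1\}})$ for every $p$, so a one-line induction on $p\ge l+1$ propagates the membership to $(i,j)\in E(G^{\{p\}})$ for all $p\ge l+1$, giving (1).

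Since (1), (2), and (3) each follow from the single hypothesis $\eta^{\{l\}}_{ij}<0$, they are pairwise equivalent, proving the corollary. The only point that requires any argument beyond citation is the monotonicity $E(G^{\{p\}})\subseteq E(G^{\{p+1\}})$, and this is immediate from the union in Definition \ref{Def.1}; so there is no real obstacle to overcome.
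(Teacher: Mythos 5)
Your proof is correct and follows exactly the route the paper intends: the authors state Corollary \ref{RRM3} without proof, remarking only that it follows "by combining Definition \ref{d3}, Definition \ref{Def.1} and Theorem \ref{Thm.1}," and your argument is precisely that assembly (the non-adjacent case of Definition \ref{d3} for (2), Theorem \ref{Thm.1}(3) for (3), and the edge-union recursion plus monotonicity for (1)). Nothing further is needed.
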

\section{Structural descriptor sequence of a graph}
Let $ \mathcal{NM}^{\{l\}}(G)$ be the sequence of powers of $ \mathcal{NM}  $ corresponding to a graph $ G $, where $ 1\leq l\leq k(G), k(G)=\ceil*{\log_{2}diameter(G)} $. In the following, we define a novel collection of measures to quantify the structure of a graph as follows:
\begin{Defn} \label{IsoDef2}
Let $ w_{1}, w_{2}, w_{3}, w_{4}, w_{5}$ and $ w_{6} $  be six distinct irrational numbers. For $ x\in N(G^{\{l\}},i) $, let \begin{equation}\label{Isoeq.3}
 M_{1}(G^{\{l\}},i,x)={\Bigg(\dfrac{\eta_{ix}^{\{l\}}}{w_{1}} \Bigg)} +   {\Bigg(\dfrac{|\eta_{xx}^{\{l\}}|-   \eta_{ix}^{\{l\}} +w_{3}}{w_{2}} \Bigg).} 
\end{equation} 
Consider an ordering of the elements  $\langle x_{1}, x_{2},...\rangle $ of $ N(G^{\{l\}},i) $, such that

 $ M_{1}(G^{\{l\}},i,x_{1})\leq M_{1}(G^{\{l\}},i,x_{2})\leq...\leq M_{1}(G^{\{l\}},i,x_{|N({G^{\{l\}}},i)|}) $. 

For $ y\in X(G^{\{l\}},i) $, let   \begin{equation}\label{Isoeq.4}
\begin{aligned}
 M_{2}(G^{\{l\}},i,y) {} = & {}  {\Bigg(\dfrac{|\eta_{iy}^{\{l\}}|}{w_{4}} \Bigg)}+ {\Bigg(\dfrac{|N({G^{\{l\}}},y)\cap X({G^{\{l\}}},i)| +w_{3}}{w_{5}} \Bigg)}\\ & +  
 {\Bigg(\dfrac{|\eta_{yy}^{\{l\}}|-|N({G^{\{l\}}},y)\cap X({G^{\{l\}}},i)|-|\eta_{iy_{j}}^{\{l\}}|+w_{3}}{w_{6}} \Bigg).} 
\end{aligned} 
\end{equation}\normalsize Consider an ordering of the elements  $ \langle y_{1}, y_{2},...\rangle $ of $ X(G^{\{l\}},i) $, such that

 $ M_{2}(G^{\{l\}},i,y_{1})\leq M_{2}(G^{\{l\}},i,y_{2})\leq...\leq M_{2}(G^{\{l\}},i,y_{|X({G^{\{l\}}},i)|}) $. 
\end{Defn}
Note that by Lemma \ref{prop.2}, the induced subgraph obtained from two level decomposition of $ G^{\{l\}} $ with root $ i $, is given by the vertex set $ [i\cup N(G^{\{l\}},i)\cup X(G^{\{l\}},i) ] $. In the above definition, $ M_{1} $ is the measure computed as weighted sum to count the number of edges that connect vertices in level 1 to level 2 and the number of edges that connect vertices within level 1. That is, for $x\in N(G^{\{l\}},i), \eta^{\{l\}}_{ix} $ represent the number of vertices connected with $ x $ and not belonging to the same level as $ x $, while $ |\eta^{\{l\}}_{xx}|-\eta^{\{l\}}_{ix} $ counts the number of vertices connected to $ x $ and belonging to the same level as $ x $. We use different weights, namely $ \dfrac{1}{w_{1}}, \dfrac{1}{w_{2}}, \ldots $, to distinguish the same.  

Similarly, $ M_{2} $ is the weighted sum to count  the  number of edges that connect vertices in level $ 2 $ with vertices in level $ 1 $, vertices within level $ 2 $ and the vertices in level $ 3 $. That is, for $ y\in X(G^{\{l\}},i),|\eta^{\{l\}}_{iy}| $ represents the number of vertices adjacent to $ y $ from level $ 1 $, $ |N(G^{\{l\}},y)\cap X(G^{\{l\}},i)| $ counts the number of vertices in level $ 2 $ adjacent with $ y $. Note that as the matrix immediately does not reveal the edges present with in this level. Here one neeeds to do extra effort in finding that. Next term, $ |\eta_{yy}^{\{l\}}|-|N({G^{\{l\}}},y)\cap X({G^{\{l\}}},i)|-|\eta_{iy}^{\{l\}}| $ counts the number of vertices adjacent with $ y $ and not belonging to level 1 or level 2. Here again, we use different weights, $ \dfrac{1}{w_{4}},\dfrac{1}{w_{5}}, \dfrac{1}{w_{6}}$, to keep track of these values.  The irrational number $ w_{3} $ is used to keep a count of the number of vertices which are isolated within the level in level decomposition.  By Corollaries \ref{RRM1}, \ref{RRM2} and \ref{RRM3}, the integer coefficients in $ M_{1}(G^{\{l\}},i,x) $ and $ M_{2}(G^{\{l\}},i,y) $ gives the complete information about the induced subgraph $ [i\cup N(G^{\{l\}},i)\cup X(G^{\{l\}},i)] $ along with the volume of edges connecting outside it for any given $ l $.

Let us now define a finite sequence, where each element is associated with a vertex. We call this vector as \textbf{\textit{Structural Descriptor Sequence}}.
\begin{Defn}\label{IsoDef3}
A finite sequence, known as \textbf{\textit{Structural Descriptor Sequence}}, $ R_{G} (i)$, for $, i\in V(G) $, is defined as the weighted sum of the ordered sets of measure $ M_{1}(G^{\{l\}},i,x_{j}) $ and $ M_{2}(G^{\{l\}},i,y_{j}) $ given by,
\begin{equation} \label{Isoeq.5}
R_{G}(i)=\sum\limits_{l=1}^{k} \dfrac{1}{Irr(l)}  \Bigg({{\sum\limits_{j=1}^{|N({G^{\{l\}}},i)|}\dfrac{M_{1}({G^{\{l\}}},i,x_{j})}{Irr(j)}}+ {\sum\limits_{j=1}^{|X({G^{\{l\}}},i)|}\dfrac{M_{2}({G^{\{l\}}},i, y_{j})}{Irr(j)}}} \Bigg )
\end{equation} where 
 $ Irr $ is a finite sequence of irrational numbers of the form  $ \langle \sqrt{2}, \sqrt{3}, \sqrt{5},...\rangle $.
 \end{Defn}
In the above definition note that for every $ i\in V(G) $, $ R_{G}(i) $ captures the complete structural information about the node $ i $ and hence the finite sequence $ \{R_{G}(i)\} $ is a complete descriptor of the given graph. We use the \textbf{\textit{Structural Descriptor Sequence}} explicitly to study the graph isomorphism problem. Rest of this section, will discuss this problem and its feasible solution. 
\begin{Remark}
The following inequalities are immediate from the definition. For any given $ i\in V(G),1\leq l\leq k(G) $ and $ x\in N(G^{\{l\}},i), y\in X(G^{\{l\}},i) $, we have
\begin{enumerate}
\item $  1\leq \eta_{ix_{j}}^{\{l\}}\leq (n-1) $ 
\item  $ 0\leq  |\eta_{x_{j}x_{j}}^{\{l\}}|-   \eta_{ix_{j}}^{\{l\}}\leq (n-2) $
\item $ 1\leq |\eta_{iy_{j}}^{\{l\}}|\leq (n-2)$
\item  $ 0\leq |N({G^{\{l\}}},y_{j})\cap X({G^{\{l\}}},i)|\leq (n-3)  $
\item $ 0\leq \Big(|\eta_{y_{j}y_{j}}^{\{l\}}|-|N({G^{\{l\}}},y_{j})\cap X({G^{\{l\}}},i)|-|\eta_{iy_{j}}^{\{l\}}|\Big)\leq (n-3) $ $\hfill \square $
\end{enumerate} 
\end{Remark}

It is well known that two graphs with different number of vertices or with non-identical degree sequences are non-isomorphic. Hence, we next prove a theorem to test isomorphism between two graphs having same number of vertices and identical degree sequence. 

\begin{Theorem} \label{Poly_Iso_Thm_1}
If two graphs $ G $ and $ H $ are isomorphic then the corresponding sorted structural descriptor sequence $ R_{G} $ and $ R_{H} $ are identical.
\end{Theorem}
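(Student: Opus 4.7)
The plan is to show that the entire construction defining $R_G(i)$ depends only on the isomorphism class of $G$, so that an isomorphism $\theta \colon V(G) \to V(H)$ induces the identity $R_G(i) = R_H(\theta(i))$ for every $i \in V(G)$; sorting both sequences then yields identical lists.

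First I would fix an isomorphism $\theta \colon V(G) \to V(H)$ and prove by induction on $l$ that the sequences of matrices satisfy $\eta^{\{l\}}_{ij}(G) = \eta^{\{l\}}_{\theta(i)\theta(j)}(H)$ for all $i,j$ and $1 \le l \le k(G)$. The base case $l=1$ is immediate from Definition \ref{d3}, since each entry depends only on $|N_G(\cdot)|$, $|N_G(j)-N_G(i)|$, and $|N_G(i)\cap N_G(j)|$, quantities preserved by any bijection that preserves adjacency. For the inductive step, Definition \ref{Def.1} constructs $G^{\{l\}}$ from $G^{\{l-1\}}$ by adding exactly those edges $(i,j)$ with $\eta^{\{l-1\}}_{ij} < 0$; by the inductive hypothesis these added edges transport correctly under $\theta$, so $G^{\{l\}} \cong H^{\{l\}}$ via $\theta$, and applying the base-case argument to $G^{\{l\}}$ gives the claim for $l$. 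As a byproduct $k(G) = k(H)$ by Definition \ref{Def.2}.

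Next I would deduce that the neighbourhood-type sets introduced in Definition \ref{IsoDef1} are transported by $\theta$: $\theta(N(G^{\{l\}},i)) = N(H^{\{l\}},\theta(i))$ and $\theta(X(G^{\{l\}},i)) = X(H^{\{l\}},\theta(i))$, since membership is read off from the sign of $\eta^{\{l\}}$. Plugging this into formulas \eqref{Isoeq.3} and \eqref{Isoeq.4}, every integer coefficient appearing in $M_1(G^{\{l\}},i,x)$ and $M_2(G^{\{l\}},i,y)$, namely $\eta^{\{l\}}_{ix}$, $|\eta^{\{l\}}_{xx}|$, $|\eta^{\{l\}}_{iy}|$, $|\eta^{\{l\}}_{yy}|$, and $|N(G^{\{l\}},y)\cap X(G^{\{l\}},i)|$, matches the corresponding coefficient for the $\theta$-image in $H$. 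Consequently $M_1(G^{\{l\}},i,x) = M_1(H^{\{l\}},\theta(i),\theta(x))$ and similarly for $M_2$, and the two orderings $\langle x_1,x_2,\dots\rangle$ and $\langle y_1,y_2,\dots\rangle$ specified in Definition \ref{IsoDef2} are carried by $\theta$ onto the analogous orderings for $H$ (ties, if any, are broken consistently because the values being compared are equal).

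Finally, summing in \eqref{Isoeq.5} over matching indices with identical weights $1/Irr(l)$, $1/Irr(j)$ gives $R_G(i) = R_H(\theta(i))$ for each $i \in V(G)$. Hence the multisets $\{R_G(i) : i \in V(G)\}$ and $\{R_H(j) : j \in V(H)\}$ coincide, and once sorted in (say) non-decreasing order the two sequences are identical, as required. The only subtle step is the inductive bookkeeping in the first paragraph — one must verify that the construction of $G^{\{l\}}$ commutes with the relabeling $\theta$ — but there is no combinatorial obstruction, since every ingredient of the construction is a function of $\mathcal{NM}^{\{l-1\}}$, which has already been shown to transform equivariantly.
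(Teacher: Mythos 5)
Your proposal is correct and reaches the same conclusion by the same overall strategy as the paper --- establish that the isomorphism transports $N(G^{\{l\}},i)$ and $X(G^{\{l\}},i)$ onto their counterparts in $H$, deduce $M_{1}(G^{\{l\}},i,x)=M_{1}(H^{\{l\}},\theta(i),\theta(x))$ and likewise for $M_{2}$, and conclude $R_{G}(i)=R_{H}(\theta(i))$ --- but the key intermediate step is justified differently. The paper does not run an induction on $l$; instead it invokes the distance characterization of the entries of $\mathcal{NM}^{\{l\}}$ (Theorem \ref{Thm.1} together with Corollaries \ref{RRM1} and \ref{RRM3}), observing that $N(G^{\{l\}},i)$ and $X(G^{\{l\}},i)$ are exactly the sets of vertices at distance at most $2^{l-1}$, respectively between $2^{l-1}$ and $2^{l}$, from $i$, and that an isomorphism preserves all shortest-path distances, so these sets and the induced subgraphs on them are carried over at once for every $l$. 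You instead prove equivariance of the matrix sequence $\eta^{\{l\}}_{ij}(G)=\eta^{\{l\}}_{\theta(i)\theta(j)}(H)$ by induction on the iterative construction of $G^{\{l\}}$ from $G^{\{l-1\}}$, which is more self-contained (it does not presuppose the distance characterization) and makes the claim $k(G)=k(H)$ fall out explicitly rather than being asserted at the outset as in the paper. Your write-up is also more careful on two points the paper glosses over: that ties in the orderings of Definition \ref{IsoDef2} are harmless because tied elements contribute equal values, and that passing from the pointwise identity $R_{G}(i)=R_{H}(\theta(i))$ to equality of the \emph{sorted} sequences is a statement about equal multisets. Both arguments are sound; the paper's is shorter given its earlier machinery, while yours would survive even if Theorem \ref{Thm.1} were unavailable.
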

\begin{proof}
Suppose $ G $ and $ H $ are isomorphic, then we have $ k(G)=k(H) $, and there exist an adjacency preserving bijection $\phi :V(G)\rightarrow V(H) $, such that $ (i,j)\in E(G)  $ if and only if $ (\phi(i),\phi(j))\in E(H) $ where $ 1\leq i,j\leq n $.

It is well known that, for any $ i\in V(G) $ and for given $ l,  1\leq l\leq k(G) $ the subgraph induced by  $ \{x\in V(G) : d_{G}(i,x)\leq 2^{l-1}\}$ is isomorphic to the subgraph induced by $\{\phi(x)\in V(H) : d_{H}(\phi(i),\phi(x))\leq 2^{l-1}\}  $. Similarly, the subgraph induced by set of vertices $  \{y\in V(G) : 2^{l-1}<d_{G}(i,y)\leq 2^{l}\}$ is isomorphic to $\{\phi(y)\in V(H) :2^{l-1}< d_{H}(\phi(i),\phi(y))\leq 2^{l}\}  $. By Corollary \ref{RRM1} and Corollary \ref{RRM3} and Definition \ref{IsoDef1} we have $ N(G^{\{l\}},i) $  isomorphic to $ N(H^{\{l\}},\phi(i)) $ and $X(G^{\{l\}},i) $ is isomorphic to $ X(H^{\{l\}},\phi(i)) $. By equations (\ref{Isoeq.3}) and (\ref{Isoeq.4})  we have, $\forall i\in V(G)  $ and $ \forall x_{j}\in N(G^{\{l\}},i)$  $M_{1}(G^{\{l\}},i,x_{j})=M_{1} (H^{\{l\}},\phi(i),\phi(x_{j})) $  and for every $ i\in V(G) $, $ y_{j}\in X(G^{\{l\}},i) $ we have $ M_{2}(G^{\{l\}},i,y_{j})=M_{2} (H^{\{l\}},\phi(i),\phi(y_{j})) $.
Since each element of the sequence $ M_{1}(G^{\{l\}},i,x_{j}),  M_{1}(H^{\{l\}},\phi(i),\phi(x_{j})), M_{2}(G^{\{l\}},i,y_{j}) $ and $ M_{2}(H^{\{l\}},\phi(i),\phi(y_{j}) ) $ are linear combinations of irrational numbers, by equating the coefficients of like terms, we further get the following 5 equalities between entries of $ \mathcal{NM}^{\{l\}}(G) $ and $ \mathcal{NM}^{\{l\}}(H) $. 
%\begin{enumerate}
\begin{eqnarray}\label{ee1}
 \eta^{\{l\}}_{ix_{j}} & = & \eta^{\{l\}}_{\phi(i)\phi(x_{j})} \\
\label{ee2}
 \eta^{\{l\}}_{iy_{j}} & = & \eta^{\{l\}}_{\phi(i)\phi(y_{j})} \\
\label{ee3}
 \big(|\eta^{\{l\}}_{x_{j}x_{j}}|-\eta^{\{l\}}_{ix_{j}}\big )& = &\big ( |\eta^{\{l\}}_{\phi (x_{j})\phi (x_{j})}|-\eta^{\{l\}}_{\phi(i) \phi (x_{j})}\big ) \\
\label{ee4}
 \big |N({G^{\{l\}}},y_{j})\cap X({G^{\{l\}}},i)\big |& = &\big |N({H^{\{l\}}},\phi (y_{j}))\cap X({H^{\{l\}}},\phi(i))\big |
 \end{eqnarray}
\begin{equation} \label{ee5}
\begin{aligned}
 \big(|\eta_{y_{j}y_{j}}^{\{l\}}|-|\eta_{iy_{j}}^{\{l\}}|-|N({G^{\{l\}}},y_{j})\cap X({G^{\{l\}}},i)|\big) {} & =  \big(|\eta_{\phi (y_{j})\phi(y_{j})}^{\{l\}}|  -|\eta_{\phi(i)\phi(y_{j})}^{\{l\}}| \\ &  -|N({H^{\{l\}}},\phi(y_{j}))\cap X({H^{\{l\}}},\phi(i))|\big)  
 \end{aligned}
\end{equation}
\normalsize 
where $ x_{j}\in N(G^{\{l\}},i)  $ and $ y_{j}\in X(G^{\{l\}},i) $, for every $ i\in V(G) $ and for every $ l, 1\leq l\leq k(G) $. 

Therefore there exists a bijection from $ R_{G}$ to $ R_{H} $ satisfying. $ R_{G}(i)=R_{H}(\phi(i)) $, $ \forall i\in V(G)$. 
\end{proof} 

\begin{Remark}\label{REMK3}
The converse of Theorem \ref{Poly_Iso_Thm_1} need not hold. For example in the case of strongly regular graphs with parameters $ (n,r,\lambda,\mu) $, where $ n-$ number of vertices, $ r- $ regular, $ \lambda- $ number of common neighbours of adjacent vertices and $ \mu- $ number of common neighbours of non adjacent vertices, we see that the sequences are always identical for any graphs. 	
\end{Remark}

\subsection{Algorithm to compute Structural descriptor sequence}
Now we present a brief description of proposed algorithm to find the structural descriptor sequence for any given graph $ G $. The pseudocodes of the algorithm is given in Appendix. The objective of this work is to get the unique structural sequence given by a descriptor of graph $ G $ using Algorithm \ref{ALO.1} - \ref{ISOALO.3}. The algorithm is based on the construction of the sequence of graphs and its corresponding matrices $ \mathcal{NM}^{\{l\}},1\leq l\leq k(G) $. The structural descriptor sequence is one time calculation for any graph. The sorted sequences are non identical then we conclude corresponding graphs are non-isomorphic. Moreover polynomial time is enough to construct the structural descriptor sequence. 

Given a graph $ G $, we use the algorithm \ref{ALO.1} from \cite{NM_SPath} to compute the sequence of powers of neighbourhood matrix $ \mathcal{NM}^{\{l\}}$. In this module(Algorithm \ref{ALO.1}), we compute the $ \mathcal{NM}(G) $ matrix and the sequence of $ k(G) $ matrices, namely powers of $ \mathcal{NM} (G)$ [denoted by $ SPG(:,:,:) $] associated with $ G $. The algorithm is referred as  \textsc{SP-$\mathcal{NM}(A)$}, where $ A $ is the adjacency matrix of $G$.

Next, we compute a sequence of real numbers $ M_{1}(G^{\{l\}},i,x_{j})$ and $ M_{2}(G^{\{l\}},i,y_{j}),\forall i,$ and for given $l,1\leq l\leq k(G) $ using the equations (\ref{Isoeq.1}), (\ref{Isoeq.2}), (\ref{Isoeq.3}), (\ref{Isoeq.4}) and (\ref{Isoeq.5}) which corresponds to the entries of $ \mathcal{NM} $ describing the structure of two level decomposition rooted at each vertex $i\in V(G^{\{l\}})$, and for any $l,  1\leq l\leq k(G)  $. For evaluating equations (\ref{Isoeq.3}) and (\ref{Isoeq.4}), we choose the weights $ w_{1} $ to $ w_{7} $ as: $ w_{1}=\sqrt{7}, w_{2}=\sqrt{11}, w_{3}=\sqrt{3}, w_{4}=\sqrt{13}, w_{5}=\sqrt{17}, w_{6}=\sqrt{19}  $, which are all done by \ref{ISOALO.3}.

Atlast, we compute a sequence of $ n $ real numbers given by $ R_{G} $ in equation (\ref{Isoeq.5}) for every vertex of the given graph $ G $, denoted by $ R_{G} $. The sequence $  R_{G} $ is constructed from the structural information of two level decomposition of $ G^{\{l\}}, 1\leq l\leq k$, where $k=\ceil*{\log_{2}diameter(G)} $ using the entries of sequence of powers of $ \mathcal{NM}^{\{l\}} $, which are all done by \ref{ISOALO.2}.

\subsection{Time Complexity}
As stated before, the above described algorithm has been designed in three modules and is presented in Appendix.

 In this module (Algorithm \ref{ISOALO.2}), we compute the structural descriptor sequence $ R_{G} $ corresponding to given graph $ G $. The algorithm is named as \textsc{$S_{-}D _{-} S(A,Irr)$}, where $ A $ is the adjacency matrix of $ G $ and $ Irr- $ square root of first $ (n-1) $ primes. We use Algorithm \ref{ALO.1} in Algorithm \ref{ISOALO.2} to compute the $ \mathcal{NM}(G) $ matrix and the sequence of $ k(G) $ matrices, namely powers of $ \mathcal{NM} (G)$ [denoted by $ SPG(:,:,:) $] associated with $ G $. The algorithm is named as \textsc{SP-$\mathcal{NM}(A)$}, as we apply the product of two matrices namely adjacency A(G) and the Laplacian $C(G)$ to obtain the $\mathcal{NM}(G)$ and we do this for $k(G) $ times, for where $ k(G)= \ceil*{\log_{2}(diameter(G))}$, while $G$ is connected and $ k(G) + 1$ times while $G $ is disconnected. We use Coppersmith - Winograd algorithm \cite{C.W} for matrix multiplication. The running time of Algorithm \ref{ALO.1} is $ \mathcal{O}(k.n^{2.3737}) $.  Finally we use Algorithm \ref{ISOALO.3} in Algorithm \ref{ISOALO.2} to we compute the structural descriptor for each $ \mathcal{NM} $ in sequence of powers of $ \mathcal{NM}(G) $. The algorithm is named as \textsc{Structural$ _{-} $ Descriptor}($ \mathcal{NM},n,Irr $), where $ \mathcal{NM} $ is any sequence of $\mathcal{NM} $ corresponding to given $ G $, $ n- $ number of vertices. The running time of Algorithm \ref{ISOALO.3} is $ \mathcal{O}(n^{3}) $.  Therefore the total running time of Algorithm \ref{ISOALO.2} is $ \mathcal{O}(n^{3}\log n) $.

Note that, the contrapositive of Theorem \ref{Poly_Iso_Thm_1} states that if the sequences $ R_{G} $ and $ R_{H} $ differ atleast in one position, then the graphs are non-isomorphic. We exploit to study the graph isomorphism problem. In Algorithms \ref{ALO.1} - \ref{ISOALO.3}, we implement the computation of structural descriptor sequence and use the theorem to decide if the graphs are non-isomorphic. 
However, inview of Remark \ref{REMK3}, when the sequence are identical for two graphs, we cannot conclude immediately whether the graphs are isomorphic or not. Hence we tried to extract more information about the structure of the graphs. 
In this attempt, by using the first matrix of the sequence namely the $ \mathcal{NM}(G) $, we find Maximal cliques of all possible size. We use this information to further discriminate the given graphs. 

\section{Clique sequence of a graph}
We first enumerate all possible maximal cliques of size $ i, 1\leq i\leq w(G)=$clique number   and store it in a matrix of size $ t_{i}\times i $, where $ t_{i} $ is the number of maximal cliques of size $ i $, denote it by $ LK_{t_{i},i} $. Note that for each $ i $, we obtain a matrix, that is, $ w(G) $ such matrices. Secondly for each $ i $, we count the number of cliques to which each vertex belongs to and store as a vector of size $ n (C_{j}^{i}), 1\leq j\leq n$. In this fashion we get $ w(G) $ such vectors. Consider the sequence obtained by computing for each $ j $, $ R=\Bigg\{\sum\limits_{i=1}^{w(G)}\dfrac{C_{j}^{i}}{Irr(i)}\Bigg\}_{j=1}^{n}$. Finally we construct the clique sequence $ CS $, defined as follows $ CS=\Bigg\{\sum_{j=1}^{n} C_{j}^{i}\cdot R(j)   \Bigg\}_{i=1}^{w(G)} $

\subsection{Description of Maximal cliques Algorithm}
In this section we present a brief description of the proposed algorithm to find the Maximal cliques of all possible size of given graph $ G $. The algorithm based on neighbourhood matrix corresponding to $ G $. 
All possible Maximal cliques work done by Algorithm \ref{Clique_Main} and \ref{Clique_Sub}. Algorithm \ref{Clique_Sub} is iteratively run in Algorithm \ref{Clique_Main} by following inputs,   $ A $ is an adjacency matrix corresponding to given graph,  $ W \subset V $, set of vertices of $ W $ is complete subgraph on $ 3\times d $ vertices where $ d\leftarrow 1,2,... $,   $X\subset V,  X+W\subseteq G $ and vertex labels of vertices in $ X $ is greater than vertex labels in $ W $ and $Y\subset V,  Y+W\subset G $ and vertex labels of vertices in $ X $ is greater than vertex labels in $ Y $. Initially, $W=\emptyset, X=1,2,...,n $ and $ Y=\emptyset $, first we find adjacency matrix $ C $ corresponding to $ [X] $ and the neighourhood matrix $ CL $ corresponding to $ C $. The algorithm runs for each vertices in $ [X] $. $ g_{1} $ is a neighbours of $ i $, if $ |g_{1}| >0$ then $ i $ is not isolated vertex and the neighbours of $ i $ should contained in any $ K_{2},K_{3},... $ to gether with $ i $.  

We define new $ g_{1} $, $ g_{1}=\{q:q>i,q\in g_{1}\} $, this elimination used to reduce the running time and maintain the collection of Maximal cliques. Now we construct the sequence of number of triangles passing through each elements of $ g_{1} $ together with $ i $ and stored in $ p $, this can be done by the entries of neighbourhood matrix in step 7.  Now we find the set of vertices which are all not contained in any $ K_{3} $, $ r=\{f: p(f)=0,f\in g_{1}\} $. If suppose for all vertices in $ Y $ are not adjacent with  $ \{i,r(f)\},f=1,2,...,|r| $ then the vertices $ \{i,r(f)\} $ can add in distinct $ K_{2} $ which is done by step 8 - 12. 

Now we again redefine $ g_{1} $, $ g_{1}=\{q: p(q)>0\} $ and find the edges in upper triangular matrix of $ C(g1,g1) $ and stored in $ a_{4} $. Each edges in $ a_{4} $ together with $ i $ contained in any $ K_{3},K_{4},..,\text{ and  so on,} $.  To find $ s $ is a number of common neighbour between  $ a_{4}(u,1) $ and $ a_{4}(u,2), u=1,2,...,|a_{4}|$. If $ s>1 $ then we find the common neighbours among $ i,a_{4}(u,1), a_{4}(u,2) $ and stored in $ a_{8} $. $ a_{9}=\{q: q>a_{4}(u,2), q\in a_{8}\} $, this set used to reduce the running time and maintain the collection of Maximal cliques. If $ |a_{9}|=1 $ then it possible to add in distinct $ K_{4} $ at the same time it is not possible to add, when there exist an edge between $ a_{8} $ and $ a_{9} $ and atleast one vertex in $ Y $ is adjacent with $ \{i,a_{4}(u,1),a_{4}(u,2), a_{9}\} $. which is done by steps 19 - 23. If suppose $ |a_{9}|=0 $ and $ |a_{8}| =0$ then it possible to add in distinct $ K_{3} $ at the same time it is not possible to add, when atleast one vertex in $ Y $ is adjacent with $ \{i,a_{4}(u,1),a_{4}(u,2)\} $. which is done by steps 25 - 29. At last the set  $a_{9}  $ together with $ \{i,a_{4}(u,1),a_{4}(u,2)\} $, possible to contained in $ K_{4},K_{5},... $ and so on,. 

Upon repeated applications of this iterative process to $ W,X $ and $ Y $ we get all possible Maximal cliques. 
Now we get the following outputs, $ U- $ cell array which contains the number of Maximal cliques contained in each $ i\in V $ and $ Z- $ cell array which contains all possible Maximal cliques.  Our aim is to construct a unique sequence using the outputs $ U $ and $ Z $. Let $ C= $ Let $ C $ is a matrix of size ($ t\times n $), which is constructed from $ U $. Let  $ R=\sum_{i=1}^{t}\dfrac{row_{i}(C)}{Irr(i)}, Irr=\sqrt{2},\sqrt{3},\sqrt{5},... $.  Finally the clique sequence $ CS $ defined as follows, $ CS=\Big\{\sum_{j=1}^{n} {C(i,j)}\cdot{R(j)}    \Big\}_{i=1}^{t} $.
\subsection{Time Compleixty}
As stated before, the above described algorithm has been designed in three modules and is presented in Appendix.

In this module (Algorithm \ref{Clique_Sequence}), we compute the clique sequence and subsequence of clique sequence corresponding to given graph $ G $. The algorithm is named as \textsc{Clique$ _{-} $Sequence}($ A,Irr $), where $ A $ is a adjacency matrix of  $ G $. We run Algorithm \ref{Clique_Main}  in this module which is named as \textsc{Complete$ _{-} $Cliques}($ A $). Algorithm  \ref{Clique_Sub} iteratively run in Algorithm \ref{Clique_Main} which is named as \textsc{Cliques}($ A,W,X,Y $), where  $ A $ is an adjacency matrix corresponding to given graph,  $ W \subset V $, set of vertices of $ W $ is complete subgraph on $ 3d $ vertices where $ d\leftarrow 1,2,... $,   $X\subset V,  X+W\subseteq G $ and vertex labels of vertices in $ X $ is greater than vertex labels in $ W $ and $Y\subset V,  Y+W\subset G $ and vertex labels of vertices in $ X $ is greater than vertex labels in $ Y $. The worst running time of algorithm \ref{Clique_Sub} is $ \mathcal{O}(|X|^{5}) $ and algorihtm \ref{Clique_Main} is $ \mathcal{O}(n^{5}) $ + $ \sum_{i=1}^{ct}\sum_{j=3*i}^{n-2}(n-j)^{5}{j-1 \choose 3*i-1} $, where $ct=\round{\dfrac{n}{4}}$. If we expand the sum of the series that's seems to be very hard. Therefore the worst case running time of the Algorithm \ref{Clique_Sequence} is exponential.

\section{Computation and Analysis}
Idenntifying non-isomorphic graphs along with existing algorithm will be executed as follows:

Given a collection of graphs: $ \mathcal{G} $
\begin{enumerate}
	\item We first compute the Structural descriptor sequence using Algorithm \ref{ALO.1} - \ref{ISOALO.3}. So we have a $ n $ element sequence for each graph in $ \mathcal{G} $. 
	From here, all the distinct sequences computed above represent non-isomorphic graphs ($ \mathcal{G}_{1} $). The remaining graphs $ \mathcal{G}_{2} =(\mathcal{G}-\mathcal{G}_{1}) $ are the input for the next step. 
	\item Here we compute the clique sequences for the graphs in $ \mathcal{G}_{2} $. Here we have a $ n- $ element sequence for each graph in $ \mathcal{G}_{2} $. Now all the distinct sequences represent non-isomorphic graphs. Let the collection be $ \mathcal{G}_{3} $. Let $ \mathcal{G}_{4}=\mathcal{G}_{2}-\mathcal{G}_{3} $
	\item On this collection $ \mathcal{G}_{4} $: we define a relation $ G_{1}\backsim G_{2} $ if and only if $ CS(G_{1})=CS(G_{2}), G_{1},G_{2}\in \mathcal{G}_{4} $. Note that the relation forms an equivalance relation and partitions $ \mathcal{G}_{4} $ into equivalance classes having graphs with identical $ CS $ sequences in the same class. 
	\item We run the existing isomorphism algorithm comparing graphs with in each of te equivalance class. 
\end{enumerate}
Such a preprocessing reduces the computational effort involved in identify in non-isomorphic graphs among a given collection of graphs as compared to running the existing algorithm for all possible pairs in $ \mathcal{G} $. 

Above procedure was implemented to verify our claim on relevant datasets for back of space. We present various existing benchmark datasets.

\underline{\textbf{$ DS_{1} $:}} This dataset contains $ 3854 $ graphs, all graphs are non isomorphic strongly regular on the family of $ (n,r,\lambda,\mu) $, where $ n=35 $, $ r=18 $, $ \lambda=9 $, $ \mu=9 $.

\underline{\textbf{$ DS_{2} $:}} This dataset contains $ 180 $ graphs, all graphs are non isomorphic strongly regular on the family of $ (n,r,\lambda,\mu) $, where $ n=36 $, $ r=14 $, $ \lambda=6 $, $ \mu=4 $.

\underline{\textbf{$ DS_{3} $:}} This dataset contains $ 28 $ graphs, all graphs are non isomorphic strongly regular on the family of $ (n,r,\lambda,\mu) $, where $ n=40 $, $ r=12 $, $ \lambda=2 $, $ \mu=4 $.

\underline{\textbf{$ DS_{4} $:}} This dataset contains $ 78 $ graphs, all graphs are non isomorphic strongly regular on the family of $ (n,r,\lambda,\mu) $, where $ n=45 $, $ r=12 $, $ \lambda=3 $, $ \mu=3 $.

\underline{\textbf{$ DS_{5} $:}} This dataset contains $ 18 $ graphs, all graphs are non isomorphic strongly regular on the family of $ (n,r,\lambda,\mu) $, where $ n=50 $, $ r=21 $, $ \lambda=8 $, $ \mu=9 $.

\underline{\textbf{$ DS_{6} $:}} This dataset contains $ 167 $ graphs, all graphs are non isomorphic strongly regular on the family of $ (n,r,\lambda,\mu) $, where $ n=64 $, $ r=18 $, $ \lambda=2 $, $ \mu=6 $.

\underline{\textbf{$ DS_{7} $:}} This dataset contains $ 1000 $ graphs, all graphs are regular on the family of $ (n,r) $, where $ n=35 $, $ r=18$, in this collection $ 794 $ graphs are non isomorphic. \\

\underline{\textbf{On the dataset $ DS_{1} $}}

Among the $ 3854 $ graphs, we could identify $ 3838 $ graphs to be non-isomorphic with distinct sequences, and remaining $ 16 $ graphs were classified in to $ 8 $ equivalence classes, having $v=\{ 2,2,2,2,2,2,2,2 \}$, the elements of $ v $ are in each of the class. Running the existing algorithm on there ${ v_{i} \choose 2} $ graphs for each $ i $, we can completely distinguish the graph collection which has taken only $ 2376.5268 $ seconds in total, instead of running the existing isomorphism algorithm on $ {3854\choose 2} $  pairs which takes atleast 2 days. 

\underline{\textbf{On the dataset $ DS_{2 } $}}

Among the $180  $ graphs, we could identify $ 81 $ graphs to be non-isomorphic with distinct sequences, and remaining $ 99 $ graphs were classified in to $ 19 $ equivalence classes, having\\
 $v=\{2,2,2,2,2,3,4,4,4,4,5,6,6,7,7,7,7,9,16 \}$, the elements of $ v $ are in each of the class. Running the existing algorithm on there ${ v_{i} \choose 2} $ graphs for each $ i $, we can completely distinguish the graph collection which has taken only $ 18.5414 $ seconds in total. The existing isomorphism algorithm on $ {180 \choose 2} $ pairs takes $ 62.0438 $ seconds.

\underline{\textbf{On the dataset $ DS_{ 3} $}}

Among the $28  $ graphs, we could identify $ 20 $ graphs to be non-isomorphic with distinct sequences, and remaining $ 8 $ graphs were classified in to $ 4 $ equivalence classes, having $v=\{ 2,2,2,2 \}$, the elements of $ v $ are in each of the class. Running the existing algorithm on there ${ v_{i} \choose 2} $ graphs for each $ i $, we can completely distinguish the graph collection which has taken only $ 2.0855 $ seconds in total, the existing isomorphism algorithm on $ {28 \choose 2} $ pairs takes $39.9169 $ seconds. 

\underline{\textbf{On the dataset $ DS_{ 4} $}}

Among the $ 78 $ graphs, we could identify $ 78 $ graphs to be non-isomorphic with distinct sequences, which has taken only $ 5.8620 $ seconds in total, the existing isomorphism algorithm on $ {78 \choose 2} $ pairs takes $ 22.6069$ seconds. 

\underline{\textbf{On the dataset $ DS_{5 } $}}

Among the $ 18 $ graphs, we could identify $ 18 $ graphs to be non-isomorphic with distinct sequences, which has taken only $ 8.4866 $ seconds in total, the existing isomorphism algorithm on $ { 18\choose 2} $ pairs takes $ 8.3194$ seconds. 

\underline{\textbf{On the dataset $ DS_{6 } $}}

Among the $167 $ graphs, we could identify $ 146 $ graphs to be non-isomorphic with distinct sequences, and remaining $ 21 $ graphs were classified in to $ 8 $ equivalence classes, having $v=\{ 2,2,2,2,2,3,3,5 \}$, the elements of $ v $ are in each of the class. Running the existing algorithm on there ${ v_{i} \choose 2} $ graphs for each $ i $, we can completely distinguish the graph collection which has taken only $ 23.8966 $ seconds in total, the existing isomorphism algorithm on $ { 167\choose 2} $ pairs takes $ 12597.2899$ seconds. 

\underline{\textbf{On the dataset $ DS_{ 7} $}}

Among the $1000  $ graphs, we could identify $ 583 $ graphs to be non-isomorphic with distinct sequences, and remaining $ 417 $ graphs were classified in to $ 206 $ equivalence classes, having $v=\{2,2,2,...,(201 \text{ times}), 3,3,3,3,3 \}$, the elements of $ v $ are in each of the class. Running the existing algorithm on there ${ v_{i} \choose 2} $ graphs for each $ i $, we can completely distinguish the graph collection which has taken only $ 508.6248 $ seconds in total, the existing isomorphism algorithm on $ {1000 \choose 2} $ pairs takes $ 108567.0245$ seconds. 

\section{Automorpism Groups of a graph}
In this section we find the automorphism groups of given graph $ G $, using the strucutral descriptor sequence. The above sequence have a complete structural informations for each vertex $ i,i\in V $. Our aim is to get the optimal possibilities for the automorphism groups. From this sequence we can conclude, if any two values of sequence is not identical then the corresponding vertices never be symmetric. By this idea we can do the following, 
\begin{enumerate}
	\item  Let  $ R_{G} $ be the structural descriptor sequence of given graph. 
	\item Let $ fx_{i} =$  Set of vertices in the $ i^{th} $ component of given graph, where $ i=1,2,...,w $ and  $ w $ is a number of components. 
	\item Let $ h= $  Set of unique values in $ R_{G}(fx_{i}) $. 
	\item $v_{q}= \big\{\alpha : R_{G}(\alpha)=h(q), \alpha\in fx_{i}  \big\}, q=1,2,...,|h| $
	\item $ P_{q}= $ Set of all permutations of $ v_{q} $
	\item $ X $ is all possible combinations of $ P_{q}, q=1,2,...,|h| $. Moreover $ X $ is optimal options of automorphism groups. 
	\item Finally we check each possible permutaions in $ X $ with given graph for get a automorphism groups. 
\end{enumerate}
\begin{figure}[H] \scriptsize
	\centering 
	\subfigure[a graph $ G $]{
		\begin{tikzpicture}[scale=0.8,auto=left,every node/.style={draw,circle}] 
		\node (n4) at (5.5,3.5)  {4};
		\node (n3) at (4,1.5)  {3};  
		\node (n1) at (0,5.5)  {1};
		\node (n2) at (2,4.2)  {2};
		\node (n5) at (0,1.5)  {5};
		\node (n6) at (4,5.5)  {6};
		\node (n7) at (-1.5,3.5)  {7};
		\node (n8) at (2,2.8)  {8};
		\foreach \from/\to in {n1/n2, n1/n6, n2/n6, n2/n7, n2/n8, n3/n4, n3/n5, n3/n7, n4/n7, n4/n8, n5/n8, n6/n4,n6/n7,n6/n8}
		\draw (\from) -- (\to)      ; \label{autoF.a}
		\end{tikzpicture}} 
	
	\quad  \subfigure[$ R_{G}- $ {\textit{Structural Descriptor Sequence}} corresponding to $G$]       
	{\begin{tikzpicture} \normalsize \label{autoF.1b}  \node (0,2){  $ R_{G}  = \left( \begin{array}{c c c c c c c c}  
			9.9718  &   8.6746 &   9.4496  &  8.7680 &   9.5123  &  8.3244 &   8.7680 &   8.7649		
			
			\end{array} \right) $}; \end{tikzpicture}}
	
	\caption{A graph $ G $ and its structural descriptor sequence $ R_{G} $}
	\label{AutoFig}
\end{figure}

\begin{Corollary}\label{Auto_Cor}
	If the structural descriptor sequence $ R_{G} $ contains $ n $ distinct elements, then $|Aut( G)|=1 $, that is, $ G $ is an asymmetric graph.
\end{Corollary}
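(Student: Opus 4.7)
The plan is to deduce the corollary as an immediate consequence of Theorem \ref{Poly_Iso_Thm_1} applied with $H = G$. The key observation is that an automorphism of $G$ is precisely an isomorphism $\sigma : V(G) \to V(G)$, so every statement that the earlier theorem makes about an isomorphism $\phi : V(G) \to V(H)$ specializes to $\sigma$.

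Concretely, I would proceed as follows. First, let $\sigma \in \mathrm{Aut}(G)$ be arbitrary. Viewing $\sigma$ as an isomorphism from $G$ to itself, the proof of Theorem \ref{Poly_Iso_Thm_1} (in particular the pointwise equality $R_G(i) = R_H(\phi(i))$ derived at the end of that proof) gives
\begin{equation*}
R_G(i) \;=\; R_G(\sigma(i)) \qquad \text{for every } i \in V(G).
\end{equation*}
Second, I would use the hypothesis that the $n$ values $R_G(1), R_G(2), \ldots, R_G(n)$ are pairwise distinct. Combined with the displayed equality this forces $\sigma(i) = i$ for every $i$, so $\sigma$ is the identity permutation. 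Since $\sigma$ was an arbitrary element of $\mathrm{Aut}(G)$, we conclude $\mathrm{Aut}(G) = \{\mathrm{id}\}$ and hence $|\mathrm{Aut}(G)| = 1$, which is exactly the statement that $G$ is asymmetric.

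There is essentially no obstacle beyond making the specialization $H = G$ explicit; the entire content is already encoded in Theorem \ref{Poly_Iso_Thm_1}. The only point that deserves a brief sentence in the write-up is the clarification that the pointwise equality $R_G(i) = R_H(\phi(i))$ (not merely equality of the sorted sequences) is what is actually established inside the proof of Theorem \ref{Poly_Iso_Thm_1}, because the cheaper statement ``sorted sequences agree'' would not by itself force $\sigma$ to fix each vertex. Once this is pointed out, the corollary follows in a single line.
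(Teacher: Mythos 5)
Your proof is correct and takes essentially the same route as the paper: both arguments rest on the pointwise invariance $R_{G}(i)=R_{G}(\sigma(i))$ established inside the proof of Theorem \ref{Poly_Iso_Thm_1}, combined with the distinctness of the $n$ values to force $\sigma(i)=i$. The paper merely phrases this contrapositively (distinct values imply the two-level decompositions rooted at $i$ and $j$ are non-isomorphic, hence no non-trivial automorphism exists), and your observation that the pointwise rather than the sorted-sequence equality is what is actually needed is a worthwhile clarification of the step the paper leaves implicit.
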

\begin{proof}
	Let the structural descriptor sequence $ R_{G} $ contain $ n $ distinct elements, that is, $ R_{G}(i)\neq R_{G}(j), \forall i\neq j, i,j\in V(G) $. By the definition of $ R_{G} $, it is immediate that for any two vertices $ i $ and $ j $, the two level decompositions of $ G^{\{l\}} $ rooted at $ i $ and $ j $ are not isomorphic for some $ l,1\leq l\leq k(G) $. Since this is true for any two vertices, there exists no non-trivial automorphhism in $ G $. Hence $ Aut(G)$ contains only the identity mapping $ e $. 
\end{proof}

\normalsize
\begin{Remark}
	Note the converse of the above corollary need not hold. For the graph in \normalfont{Figure \ref{autoF.a}}, the structural descriptor sequence $ R_{G} $ is given by Figure \ref{autoF.1b}. Here, $ R_{G} (4)=R_{G}(7)$ but the neighbours of vertex 4 and neighbours of vertex 7 are different as they have different degree sequence and their $ R_{G}-$ values also do not coincide. Hence, we cannot find any automorphism other than identity. 
\end{Remark}
\subsection{Description of Automorphism Group Algorithm}
In this section, we present a brief description of the proposed algorithm to find the all posssible automorphism groups of given graph $ G $, this work done by Algorithm \ref{AutoAl_2} and Algorithm \ref{AutoAl_3}. The pseudocodes of the algorithm is given in Appendix. The objective of this work is find the set of all permutation matrix for the automorphism group. This work  based on the construction of structural descriptor sequenceof given graph $ G $, which is done in Algorithm \ref{ISOALO.2}.  First we find the set of vertices of connected components of given graph. This algorithm run for each connected component of the graph. Let $ fx $ be the vertices of $ i^{th} $ component of the given graph and $ h $ is the set of unique values in $ R_{G}(fx) $. Now we find a vertex set $ v $ which have identical values in $ R_{G}(fx) $ for each values in $ h $, then we find all possible permutations of $ v $. Now we find the combinations of all possible $ v $ corresponding to each elements in $ h $ and stored in $ X $. Finally we check for each permutations in $ X $ with given graph to get a automorphism groups.  

\subsection{Time Complexity}
\begin{enumerate}
\item In this module (Algorithm \ref{AutoAl_2}), we compute automorphism groups of given graph $ G $. The algorithm is nammed as \textsc{$A_{-}M_{-}G(A)$}, where $ A- $ Adjacency matrix $ G $. Algorithm \ref{AutoAl_3} is iteratively run in this module. In Algorithm \ref{AutoAl_3}, we compute optimal options of automorphism groups of given set of vertices in $ G $ this algorithm named as \textsc{$ M_{-}O_{-}A_{-}G(R_{G},h,fx) $}, where $ R_{G} $ structural descriptor sequence, $ fx- $ set of vertices in the connected component and $ h- $ unique values in $ R_{G}(fx) $. The worst case running time of this algorithm is $ \mathcal{O}(n!) $ when the graph is connected and all values are identical in a sequence. In case of $ \mathcal{O}(n!) $ we can reduce more time on the following, $ M $ is set of multiplicities of $ h $ in $ R_{G}(fx) $ which is greater than 1.  $ M=\{m_{1},m_{2},...,m_{d}\}, d\leq |h| $. Let $ y_{1}=(m_{1}! -1) $, $ y_{j}=(y_{j-1}\times (m_{j}!-1))+(y_{j-1}+ (m_{j}!-1)) $. Therefore the complexity of Algorithm \ref{AutoAl_3} is $\sum_{j=1}^{d}y_{j}\leq |fx|! \leq  n!  $. Therefore the worst case running time of Algorithm \ref{AutoAl_2} is $ \mathcal{O}(n^{2}.n!) $. 
\end{enumerate}

\section{Conclusion}
In this paper, we have studied the graph isomorphism problem and graph automorphism problem. We determining the class of non-isomorphic graphs from given graph collection.  We have proposed a novel technique to compute a new feature vector graph invariant called as \textit{Structural Descriptor Sequence}, which encodes complete structural information of the graph with respect to each node. This sequence is also unique in the way it is computed. The proposed \textit{Structural Descriptor Sequence} has been shown useful in identifying non-isomorphic graphs from given collection of graphs. We have proved that if any two sorted \textit{Structural Descriptor Sequence} are not identical then the graphs are not isomorphic. 

Further, we also propose a polynomial time algorithm for test the sufficient part of graph isomorphism between two graphs, which runs in $ \mathcal{O}(kn^{3}) $, where $ k=\ceil*{\log_{2}diameter(G)} \leq \log n$. In this paper we also proposed an algorithm for finding the automorphism groups of given graph, using the structural descriptor sequence.

\section{Acknowledgements}
The authors would like to acknowledge and thank DST-SERB Young Scientist Scheme, India [Grant No. SB/FTP/MS-050/2013] for their support to carry out this research at SRM Research Institute, SRM Institute of Science and Technology during the initial period of the grant.
 Mr. Sivakumar Karunakaran would like to thank SRM Research Institute and the Department of Mathematical Sciences, IIT (BHU) for their initial support during the preparation of this manuscript. 
 Mr. Sivakumar Karunakaran would also like to thank Dr. A. Anuradha, Assistant Professor, Department of Mathematics, SRM Institute of Science and Technology for fruitful discussions during the initial stages.  
%\begin{spacing}{1.2}

\section*{References}
%  \bibliographystyle{elsarticle-num}
%  \bibliography{Isomorphism_Feb_2019}
%%\end{spacing} 

\appendix
\section{Algorithm } 
In this section, we present the $ MATLAB $ pseudocode in the form of algorithm for testing isomorphism between two graphs and discuss their running time. 

\begin{algorithm}[H]   \caption{\textsc{Structural Descriptor Sequence of $ G $.\label{ISOALO.2}}}
	\textbf{Objective:} To Find the {\textit{Structural Descriptor Sequence}} corresponding to the given undirected unweighted simple graph $ G $ on $ n $ vertices using the sequence of powers of $ \mathcal{NM} (G)$.  \\
	\textbf{Input:} $ A- $ Adjacency matrix corresponding to the graph $ G $ and $Irr- $ is a set of $ (n-1)$ irrational numbers, where $Irr= \langle \sqrt{2},\sqrt{3},\sqrt{5},...\rangle ; $\\
	\textbf{Output:} $ R_{G} -$ is a set of $ n $ real numbers corresponding to each vertex of the graph $ G $. 
	\begin{algorithmic}[1] 
		\Procedure{$  R_{G} = S_{-}D_{-}S $  }{$A,Irr$} 
		\State [$ n $ \text{, } $ k $  \text{, } $ SPG $ ] $ \leftarrow $  \textsc{$ SP$-$\mathcal{NM}$}({$A$})
		\For {$ l \leftarrow 1 \text{ to } k $}
		\State $ \mathcal{NM}\leftarrow SPG(:,:,l) $
		\State $ E $\textsc {$ \leftarrow  $ $Structural_{-}Descriptor $  ($\mathcal{NM},n,Irr$)}
		\State $ S(l,:)\leftarrow \dfrac{E}{Irr(l)} $
		\EndFor 
		\State \textbf{end}
		\If {$ k==1 $} $ R_{G}\leftarrow S $
		\Else { $ R_{G} \leftarrow \sum S $ } 
		\EndIf 
		\State \textbf{end}
		\EndProcedure	
	\end{algorithmic} 
	
\end{algorithm}	  	

\begin{algorithm}[H] \cite{NM_SPath} \caption{\textsc{Sequence of powers of $ \mathcal{NM } $.\label{ALO.1}}}
	\textbf{Objective:} To find the iteration number $k $ and construct the sequence of powers of $\mathcal{NM} $ matrix of a given graph $G$, that  is $ \mathcal{NM}^{\{l\}} $, for every  $l,1 \leq l\leq k $. 
	
	\textbf{Input:} Adjacency matrix $ A  $ of a undirected unweighted simple graph $ G $.
	
	\textbf{Output:} The number of vertices $ n $, iteration number $k $ and $SPG-$ A three dimensional matrix of size $(n\times n\times k )$, that is for each $l$, $ 1 \leq l \leq k  $, the $n\times n-\mathcal{NM}^{\{l\}} $ matrix is given by $ SPG(:,:,l) $.
	
	\begin{algorithmic}[1] 
		\Procedure{$ [n \text{, } k  \text{, } SPG ]= $ SP$-\mathcal{NM}$}{$A$}  
		\State $ l\leftarrow 1; $
		\State  $ n \leftarrow  $ Number of rows or columns of matrix $ A  $
		\State $ z \leftarrow 0$ \Comment{Initialize }
		\While{(True)}
		\State $ L\leftarrow  $ Laplacian matrix of $ A  $
		\State $ \mathcal{NM} \leftarrow A \times L$ \Comment{Construct the $ \mathcal{NM}  $ matrix from $ A  $}.
		\State $ SPG(:,:,l)\leftarrow  \mathcal{NM }  $ \Comment{$ SPG $-sequence of powers of $ \mathcal{NM }  $ matrix.}
		\State $ u\leftarrow   nnz(\mathcal{NM } ) $ \Comment{$ nnz $-Number of non zero entries }
		\If {$u==n^{2}$} $k \leftarrow  l$ \textbf{break}
		\ElsIf {$ isequal(u,z)==1 $} $k \leftarrow  l-1 $ \textbf{break}
		\Else  { $ z\leftarrow u $ }
		\State $ A \leftarrow \mathcal{NM} ; A (A \neq 0)\leftarrow 1;$
		\For{$ p\leftarrow 1 \text{ to } n $}  $A (p,p)\leftarrow 0$
		\EndFor 
		\State \textbf{end}
		\State $l\leftarrow  l+1$
		\EndIf 
		\State \textbf{end}
		\EndWhile 
		\State \textbf{end}
		\EndProcedure
	\end{algorithmic} 
	
\end{algorithm}

\begin{algorithm}[H]   \caption{\textsc{Structural descriptor of $ G $.\label{ISOALO.3}}}	
	\textbf{Objective:} To find the sequence of $ n $ real numbers corresponding to given $ \mathcal{NM} $ which is constructed from the information of two level decomposition of each vertex $ i $.  \\
	\textbf{Input:} $ \mathcal{NM}, n  $ and $ Irr- $ Square root of first $ (n-1) $ primes. \\
	\textbf{Output:} $ E- $  Sequence of real numbers corresponding to $ i\in V $. 
	\begin{algorithmic}[1] 
		\Procedure{$  E = Structural_{-}Descriptor $  }{$\mathcal{NM},n,Irr$}  
		\State $ Deg\leftarrow |diag(\mathcal{NM})| $
		\For{$ i\leftarrow 1:n$} $ X\leftarrow \mathcal{NM}(i,:); \text{  } X(i)\leftarrow 0 ;$ 
		\State $a\leftarrow \big(find(X>0)\big );  \text{ } b\leftarrow\big(find(X<0)\big )  $
		\State $ M_{1} \leftarrow  sort\Bigg(\dfrac{X(a)}{\sqrt{7}}+\dfrac{Deg(a)-\mathcal{NM}(i,a)+\sqrt{3}}{\sqrt{11}}\Bigg)$ \Comment{$ sort- $ Sorting increasing order}
		\For{$ f\leftarrow 1 \text{ to } |a| $}  $ S_{1}\leftarrow  \sum\Big( \dfrac{M_{1}(f)}{Irr(f)}\Big)  $  \textbf{ end }
		\EndFor 
		\If{$ |b|== 0 $} $ S_{2}\leftarrow  0 $
		\Else { $ p\leftarrow  \mathcal{NM}(b,b); $ } $ p(p<0)\leftarrow  0; \text{ }  p(p>0)\leftarrow 1 ;\text{ } p\leftarrow  \sum(p) $ \Comment{Column sum of $ p $}
		\State $ M_{2} \leftarrow  sort\Bigg(\dfrac{|X(b)|}{\sqrt{13}}+\dfrac{p+\sqrt{3}}{\sqrt{17}}+\dfrac{Deg(b)-|X(b)|-p+\sqrt{3}}{\sqrt{19}}\Bigg)$ \Comment{$ sort- $ Sorting increasing order}
		\For{$ f\leftarrow 1 \text{ to } |b| $}  $ S_{2}\leftarrow  \sum\Big( \dfrac{M_{2}(f)}{Irr(f)}\Big)  $ \textbf{ end }
		\EndFor
		\EndIf 
		\State \textbf{end}
		\State $ E(i)\leftarrow S_{1}+S_{2} $			
		\EndFor
		\State \textbf{end}
		\EndProcedure
	\end{algorithmic} 
\end{algorithm}

\begin{algorithm}[H]  \caption{\textsc{Clique$ _{-} $Sequence of graph $ G $}} \label{Clique_Sequence}
	\textbf{Objective:} To get a unique sequence for differentiate the graphs using Algorithm \ref{Clique_Main}.  \\
	\textbf{Input:} Adjacency matrix $ A $ corresponding to given graph $ G $.\\
	\textbf{Output:} $ CS- $ Unique sequence of irrational numbers of size $ (1\times t) $, where t is maximum clique number. 
	\begin{algorithmic}[1] 
		\Procedure {$ CS =$Clique$ _{-} $Sequence}{$ A,Irr $} 
		\State $ n\leftarrow $ number of rows in $ A $; $ Z\leftarrow zeros(1,n)$; $ R\leftarrow Z $; $ C\leftarrow Z  $
		\State \textsc{$ [U,Z] \leftarrow $ Complete$ _{-} $Cliques}$ (A) $;  
		\For {$ i\leftarrow 1  $ to $ |U| $} $ C\leftarrow [C;U\{i\}] $ \textbf{end}\EndFor 
		\For {$ i\leftarrow 1 $ to  $ |C| $} $ R\leftarrow R+\dfrac{C(i,:)}{Irr(i)} $; \textbf{end} \EndFor 
		\For {$ i\leftarrow  1$ to $ |C| $} $ CS(i)\leftarrow  \sum_{j=1}^{n} {C(i,j)}\cdot{R(j)} $ \textbf{end}; \EndFor  
		\EndProcedure	
	\end{algorithmic} 	
\end{algorithm}

\begin{algorithm}[H]   \caption{\textsc{All possible Maximal cliques of given graph $ G $}}  \label{Clique_Main}
	\textbf{Objective:} To find all possible distinct complete subgraph of given graph $ G $.  \\
	\textbf{Input:} $ A- $ Adjacency matrix corresponding to given graph $ G $.\\
	\textbf{Output:}  $ U- $ cell array which contains the number of Maximal cliques contained in each $ i\in V $, $ Z- $ cell array which contains all possible Maximal cliques. 
	\begin{algorithmic}[1] 
		\Procedure{$ [U,Z] $= Complete$ _{-} $Cliques}{$ A $}
		\State $ n\leftarrow $Number of rows in $ A $; $ X\leftarrow 1,2,...,n $; $ Y\leftarrow \emptyset  $ ; $ W\leftarrow \emptyset  $ 
		\State $ [a,b,c,d,LK_{1},LK_{2},LK_{3},LK_{4},J] \leftarrow$  \textsc{Cliques}$ (A,W,X,Y) $; $ ct\leftarrow 1 $
		\State $ U\{ct\}\leftarrow [a;b;c;d] $; $ Z\{ct,1\}\leftarrow LK_{1} $; $ Z\{ct,2\}\leftarrow LK_{2} $; $ Z\{ct,3\}\leftarrow LK_{3} $; $ Z\{ct,4\}\leftarrow LK_{4}$
		\If {$ |J|==\emptyset $} \textbf{return}; \textbf{end} \EndIf
		\State   $ ct\leftarrow 2 $
		\While{$ (1) $}
		\If {$ J==\emptyset $}; \textbf{break}; \textbf{end}
		\EndIf 
		\State $ a\leftarrow zeros(1,n) $; $ b\leftarrow a $; $ c\leftarrow a $; $ d\leftarrow a $;  $ LK_{1}\leftarrow \emptyset $; 
		\State $ LK_{2}\leftarrow \emptyset $; $ LK_{3}\leftarrow \emptyset $; $ LK_{4}\leftarrow \emptyset $;  $ F\leftarrow \emptyset $
		\For {$ i\leftarrow 1 $ to number of rows in $ J $} $ W\leftarrow J\{i,1\} $; $ X\leftarrow J\{i,2\} $; $ Y\leftarrow J\{i,3\}-X $
		\State $ [ac,bc,cc,dc,LK_{1}c,LK_{2}c,LK_{3}c,LK_{4}c,Jc]\leftarrow $  \textsc{Cliques}$ (A,W,X,Y) $; 
		\State  $ a(e)\leftarrow a(e)+ac $; $ b(e)\leftarrow b(e)+bc $; $ c(e)\leftarrow c(e)+cc $; $ d(e)\leftarrow d(e)+dc $;
		\State  $ a(W)\leftarrow a(W)$+Number of rows in $ LK_{1}c $;  $ b(W)\leftarrow b(W)$+Number of rows in $ LK_{2}c$ ;
		\State  $ c(W)\leftarrow c(W)$+Number of rows in $ LK_{3}c $ ; $ d(W)\leftarrow d(W)$+Number of rows in $ LK_{4}c $;
		\State $ LK_{1}\leftarrow [LK_{1}; W \text{ concatenation with } LK_{1}c] $; $ LK_{2}\leftarrow [LK_{2}; W \text{concatenation with } LK_{2}c] $;
		\State $ LK_{3}\leftarrow [LK_{3}; W \text{ concatenation with } LK_{3}c] $; $ LK_{4}\leftarrow [LK_{4}; W \text{concatenation with } LK_{4}c] $;
		\For {$ t\leftarrow 1  $ to Number of rows in $ Jc $}
		\State $ Jc\{t,1\}\leftarrow [W\text{ concatenation with } Jc\{t,1\} ] $;
		\State $ Jc\{t,2\}\leftarrow [Jc\{t,2\} \text{ concatenation with } X] $; $ Q\leftarrow \{k: k+Jc\{t,1\},k\in Y\} $
		\State  $ Jc\{t,3\}\leftarrow [Q,Jc\{t,3\} \text{ concatenation with } X] $
		\EndFor 
		\State \textbf{end}
		\State $ F\leftarrow [F; Jc] $
		\EndFor 
		\State \textbf{end}
		\State $ J\leftarrow F $
		\State $ a\leftarrow a+U\{ct-1\}(4,:); U\{ct-1\}(4,:)\leftarrow \emptyset  $;
		\State $ U\{ct\}\leftarrow [a;b;c;d] $; $ Z\{ct,1\}\leftarrow [Z\{ct-1,4\};LK_{1}] $; $ Z\{ct,2\}\leftarrow LK_{2} $; 
		\State $ Z\{ct,3\}\leftarrow LK_{3} $; $ Z\{ct,4\}\leftarrow LK_{4} $; $ Z\{ct-1,4\}\leftarrow \emptyset  $; 
		\State $ ct\leftarrow ct+1 $;
		\EndWhile
		\State \textbf{end}
		\EndProcedure	
	\end{algorithmic} 	
\end{algorithm}	  	

\begin{algorithm}[H]   \caption{\textsc{Distinct $ K_{1}, K_{2},K_{3} $ and $ K_{4} $ of $[X] $ on given $ G $}} \label{Clique_Sub}
	
	\textbf{Objective:} To find distinct $ K_{1}, K_{2}, K_{3} $ and $ K_{4} $ of $ [X] $ in $ A $.  \\
	\textbf{Input:}  $ A $ is an adjacency matrix corresponding to given graph,  $ W \subset V $, set of vertices of $ W $ is complete subgraph on $ 3d $ vertices where $ d\leftarrow 1,2,... $,   $X\subset V,  X+W\subseteq G $ and vertex labels of vertices in $ X $ is greater than vertex labels in $ W $ and $Y\subset V,  Y+W\subset G $ and vertex labels of vertices in $ X $ is greater than vertex labels in $ Y $.
	
	\textbf{Output:} $ a,b,c,d\in (\mathbb{W})^{1\times |X|} $.  $ a,b,c,d $ - Number of distinct $ K_{1}, K_{2},K_{3}, K_{4} $ contained in $ [X] $ respectively.   $ LK_{1}, LK_{2},LK_{3},LK_{4} $ - Collections of distinct $ K_{1}, K_{2},K_{3},K_{4} $ in $ [X] $ respectively and $ J $ - Collection of possible distinct $ K_{4}, K_{5},... $
	
	\begin{algorithmic}[1]  \small 
		\Procedure{$ [a,b,c,d,LK_{1},LK_{2},LK_{3},LK_{4},J]= $ Cliques}{$ A,W,X,Y $} 
		\State $ C $ is an adjacency matrix corresponding to induced subgraph of $ X $.
		\State $ CL $ is an neighbourhood matrix corresponding to $ C $, $ n\leftarrow |X| $;
		\State  $ at\leftarrow 1 $; $ bt\leftarrow 1 $; $ ct\leftarrow 1 $; $ dt\leftarrow 1 $; $ vt\leftarrow 1 $
		\For {$ i\leftarrow 1 \text{ to } n $} $ g_{1}\leftarrow N_{[X]}(i) $
		\If {$ |g_{1}|>0 $} $ g_{1}\leftarrow g_{1}(g_{1}>i) $
		\If {$ |g_{1}|>0 $}  $p\leftarrow Degree(g1)-CL(i,g1)  $
		\State $ r\leftarrow g_{1}(p==0) $
		\For {$ q\leftarrow 1 \text{ to } |r| $}
		\If {for every $ Y_{w}, w= 1,2,...,|Y| $ is not adjacent with $ \{i,r(f)\} $} 
		\State $ LK_{2}(bt,:)\leftarrow [r(f),i] $; $ bt\leftarrow bt+1 $
		\EndIf 
		\State \textbf{end}
		\EndFor 
		\State \textbf{end} $ g1\leftarrow g1(p>0) $ 
		\If {$ |g_{1}|>1 $} $ a_{4} \leftarrow $  distinct edges in upper triangular matrix of $ C(g1,g1) $.
		\For {$ u\leftarrow 1 $ to $ |a_{4}| $}  $ s\leftarrow |CL(a_{4}(u,2),a_{4}(u,2))|-CL(a_{4}(u,1),a_{4}(u,2)) $
		\If {$ s>1 $} $ a_{8}\leftarrow \{N_{[X]}(i)\cap N_{[X]}(a_{4}(u,1)) \cap N_{[X]}(a_{4}(u,2))\} $
		\State $ a_{9}\leftarrow a_{8}(a_{8}>a_{4}(u,2)) $; $ tr\leftarrow |a_{9}| $
		\If {$ tr==1 $}
		\If {there exist no edge between $ a_{8} $ and $ a_{9} $}
		\If { for every $ Y_{w}, w= 1,2,...,|Y| $ is not adjacent with $\{ i,a_{4}(u,1),a_{4}(u,2),a_{9} \}$ }  
		\State $ LK_{4}(dt,:)\leftarrow [i,a_{4}(u,1),a_{4}(u,2),a_{9}] $; $ dt\leftarrow dt+1 $
		\EndIf 
		\State \textbf{end}
		\EndIf 
		\State \textbf{end}
		\ElsIf {$ tr==0 $}
		\If {$ |a_{8}|==0 $}
		\If {for every $ Y_{w}, w= 1,2,...,|Y| $ is not adjacent with $\{ i,a_{4}(u,1),a_{4}(u,2) \}$ }
		\State  $ LK_{3}(ct,:)\leftarrow [i,a_{4}(u,1),a_{4}(u,2)] $; $ ct\leftarrow ct+1 $
		\EndIf 
		\State \textbf{end}
		\EndIf 
		\State \textbf{end}
		\Else { $ J\{vt,1\}\leftarrow \{i,a_{4}(u,1),a_{4}(u,2)\} $; $ J\{vt,2\}\leftarrow a_{9} $; $ J\{vt,3\}\leftarrow a_{8} $}
		\EndIf
		\State \textbf{end}
		\Else { }
		\If { for every $ Y_{w}, w= 1,2,...,|Y| $ is not adjacent with $\{ i,a_{4}(u,1),a_{4}(u,2) \}$ } 
		\State  $ LK_{3}(ct,:)\leftarrow [i,a_{4}(u,1),a_{4}(u,2)] $; $ ct\leftarrow ct+1 $
		\EndIf 
		\State \textbf{end}
		\EndIf  
		\State \textbf{end}
		\EndFor 
		\State \textbf{end}
		\EndIf 
		\State \textbf{end}
		\EndIf 
		\State \textbf{end}
		\Else { }
		\If { forevry $ Y_{w}, w= 1,2,...,|Y| $ is not adjacent with $\{ i\}$ } 
		\State  $ LK_{1}(at,:)\leftarrow [i] $; $ at\leftarrow at+1 $
		\EndIf 
		\State \textbf{end}		
		\EndIf 
		\State \textbf{end}
		\EndFor 
		\State \textbf{end}
		\EndProcedure	
	\end{algorithmic} 	
\end{algorithm}	  	
\normalsize 

\begin{algorithm}[H]   \caption{\textsc{Automorphism Groups of given graph $ G $}} \label{AutoAl_2}
	\textbf{Objective:} To find the automorphism groups of given undirected unweighted simple graph.  
	
	\textbf{Input:} Graph $ G $, with adjacency matrix $ A $.
	
	\textbf{Output:} $ A_{-}G $- all the automorphisms of $ G $. 
	\begin{algorithmic}[1] 
		\Procedure{$ A_{-}G= A_{-}M_{-}G $}{$ A $}   
		\State $ Irr\leftarrow  $ square root of first $ (n-1) $ primes. 
		\State $ R_{G} \leftarrow S_{-}D_{-}S(A,Irr)  $; \textsc{$ [CS,R] \leftarrow $Clique$ _{-} $Sequence}($ A,Irr $)
		\State $ R_{G}\leftarrow R_{G}+R $; $ Tb\leftarrow  $ set of vertices of connected components
		\For {$ i\leftarrow 1 \text{ to } |Tb| $}
		\State $  fx\leftarrow $ $ Tb\{i\} $;  $Id\leftarrow fx; $;   
		\State $ h\leftarrow$ unique values in $ (R_{G}(fx)) $;  $ asd\leftarrow 1 $
		\If {$ |h|\neq |fx| $} 
		\State { $eL\leftarrow  \text{Sorted edge list corresponding to } $} $ [fx] $
		\State  $ X \leftarrow  M_{-}O_{-}A_{-}G(R_{G},h,fx)$ ;
		\For {$ u\leftarrow 1$ to $ |X| $} $  f\leftarrow X\{u\} $; \Comment{ \scriptsize $ f $ is optional autommorphism groups of size $ (zk\times 2), zk\leq 2n $\normalsize }         
		\State  $ tL\leftarrow $ sorted edge list corresponding to given $ f $
		\If {$ eL $ is identical with $ tL $}                        
		\State $ AMor\{asd\}\leftarrow f$ ;  $asd\leftarrow asd+1 $;\Comment{$ AMor\{asd\} $ is an cell array which stores different size of permutation}
		%\Else  { $ A_{-}No\leftarrow inf $ };
		\EndIf
		\State \textbf{end}
		\EndFor 
		\State \textbf{end}		
		\EndIf
		\State \textbf{end}
		\State $  AMor\{asd\}\leftarrow [Id' \text{ } Id'] ; A_{-}G\{i\}\leftarrow AMor$; 
		\EndFor 
		\State \textbf{end}
		\State \textbf{return}
		\EndProcedure	
	\end{algorithmic} 
\end{algorithm}	  	

\begin{algorithm}[H]   \caption{\textsc{Optimal Options of Automorphism groups of given vertex set of $ G $}} \label{AutoAl_3}
	\textbf{Objective:} To find the optimal options of the automorphism groups for given graph.  
	
	\textbf{Input:} $ R_{G}- $ Structural descriptor sequence corresponding to given graph, $ fx- $ Set of vertices in the connected component and $ h- $ unique values of $ R_{G}(fx) $.
	
	\textbf{Output:} $ X- $ cell array which contains the optimal options of automorphism for the given graph. 
	\begin{algorithmic}[1] 
		\Procedure{$ X= M_{-}O_{-}A_{-}G $}{$ R_{G},h,fx $} 
		\State $ jt\leftarrow 1 $;
		\For {$ i\leftarrow 1:|h| $}  $   v\leftarrow  $ Set of vertices which have the identical values in $ R_{G}(fx) $; 
		\If {$ |v|>1 $} ; $ fz\leftarrow $  All possible permutations of $ v $ ;  
		\State $ U\{jt\}\leftarrow fz $; $ jt\leftarrow jt+1 $;
		\EndIf 
		\State \textbf{end}
		\EndFor 
		\State \textbf{end}
		\If {$ |U|==1 $}   $  X\leftarrow U\{:\} $;
		\Else  {  $  ct\leftarrow 1 $ };$   X\leftarrow U\{1\} $; $  Y\leftarrow U\{2\} $;
		\While{(1)} $  bt\leftarrow 1 $;
		\For {$ i\leftarrow 1\text{ to }  |X| $}
		\For {$ j\leftarrow 1\text{ to }|Y| $ } 
		\State $ RX\{bt\}\leftarrow [X\{i\}\text{ ; } Y\{j\}] $; $ bt\leftarrow bt+1 $  \EndFor 
		\State \textbf{end}
		\EndFor 
		\State \textbf{end}
		\State $  X\leftarrow [X \text{, } Y \text{, } RX] $; $  ct\leftarrow ct+1 $;
		\If {$ ct==|U| $} \textbf{ break}; \textbf{ end} \EndIf 
		\State  $ Y\leftarrow U\{ct+1\} $;
		\EndWhile
		\State \textbf{end}
		\EndIf 
		\State \textbf{end}
		\State \textbf{return}
		\EndProcedure	
	\end{algorithmic} 
\end{algorithm}	  	
\end{document}